\documentclass[a4paper]{article}
\usepackage[margin=3cm]{geometry}

\usepackage{amsmath, amsfonts, amsthm, amssymb, bbm, bm}
\usepackage{mathrsfs}
\usepackage{centernot}
\usepackage{enumerate}
\usepackage{tabularx}
\usepackage{multicol}
\usepackage{multirow}


\newcommand{\E}{\mathbb{E}}

\newcommand{\G}{\mathcal{G}}
\newcommand{\X}{\mathfrak{X}}

\DeclareMathOperator{\pa}{pa}

\DeclareMathOperator{\an}{an}

\DeclareMathOperator{\dec}{de}

\DeclareMathOperator{\sib}{sib}
\DeclareMathOperator{\dis}{dis}

\DeclareMathOperator{\mbl}{mb}

\DeclareMathOperator{\pre}{pre}

\newcommand{\cmid}{\,|\,}

\newcommand{\lrarr}{\leftrightarrow}
\newcommand{\sto}{*\!\!\!\to}
\newcommand{\getss}{\gets\!\!\!*}






\newcommand\indep{\protect\mathpalette{\protect\independenT}{\perp}}
\def\independenT#1#2{\mathrel{\rlap{$#1#2$}\mkern2mu{#1#2}}}


\theoremstyle{plain}
\newtheorem{lem}{Lemma}[section]
\newtheorem{thm}[lem]{Theorem}
\newtheorem{prop}[lem]{Proposition}

\theoremstyle{definition}
\newtheorem{dfn}[lem]{Definition}
\newtheorem{rmk}[lem]{Remark}
\newtheorem{exm}[lem]{Example}

\newcommand{\benum}{\begin{enumerate}}
\newcommand{\eenum}{\end{enumerate}}

\newcommand{\bitem}{\begin{itemize}}
\newcommand{\eitem}{\end{itemize}}

\newcommand{\barr}{\begin{array}}
\newcommand{\earr}{\end{array}}

\newcommand{\bmat}{\begin{pmatrix}}
\newcommand{\emat}{\end{pmatrix}}

\newcommand{\blist}{\renewcommand{\labelenumi}{\textbf{\arabic{enumi}}.} \begin{enumerate}}
\newcommand{\elist}{\end{enumerate} \renewcommand{\labelenumi}{\arabic{enumi}.}}

\def\bal#1\eal{\begin{align*}#1\end{align*}}

\setlength{\parindent}{0pt}
\setlength{\parskip}{6pt}

\usepackage{tikz}
\usetikzlibrary{shapes, arrows, arrows.meta, calc, positioning}

\tikzset{nv/.style={circle, color=red, fill=red, inner sep=0.5mm}}
\tikzset{rv/.style={circle, draw, thick, minimum size=7mm, inner sep=0.5mm}}
\tikzset{fv/.style={rectangle, draw, thick, minimum size=7mm, inner sep=0.5mm}}
\tikzset{lv/.style={circle, color=red, fill=gray!30, draw, thick, minimum size=7mm, inner sep=0.5mm}}
\tikzset{rve/.style={ellipse, draw, thick, minimum size=7mm, inner sep=0.5mm}}

\tikzset{rvs/.style={circle, draw, thick, minimum size=6mm, inner sep=0.5mm}}
\tikzset{fvs/.style={rectangle, draw, thick, minimum size=6mm, inner sep=0.5mm}}
\tikzset{lvs/.style={circle, color=red, fill=gray!30, draw, thick, minimum size=6mm, inner sep=0.5mm}}
\tikzset{rves/.style={ellipse, draw, thick, minimum size=6mm, inner sep=0.5mm}}

\tikzset{deg/.style={->, very thick, color=blue}}
\tikzset{degl/.style={->, very thick, color=red}}
\tikzset{beg/.style={<->, very thick, color=red}}
\tikzset{cdeg/.style={{Circle[length=+2pt 2.5,width=+2pt 2.5, fill=none]}->, very thick, color=blue}}
\tikzset{cceg/.style={{Circle[length=+2pt 2.5,width=+2pt 2.5, fill=none]}-{Circle[length=+2pt 2.5,width=+2pt 2.5, fill=none]}, very thick}}
\tikzset{uceg/.style={{Circle[length=+2pt 2.5,width=+2pt 2.5, fill=none]}-, very thick}}
\tikzset{ueg/.style={very thick}}

\newcommand{\del}{\mathbin{\not\!\|\,}}

\usepackage[round]{natbib}

\usepackage[symbol]{footmisc}

\usepackage{tikz}
\usetikzlibrary{shapes,arrows,positioning}

\tikzset{
    position/.style args={#1:#2 from #3}{
        at=(#3.#1), anchor=#1+180, shift=(#1:#2)
    }
}
\tikzset{rv/.style={circle, draw, thick, minimum size=6.5mm, inner sep=0.5mm}}
\tikzset{fv/.style={rectangle, draw, thick, minimum size=6mm, inner sep=0.5mm}}

\title{Latent-Free Equivalent mDAGs}
\author{Robin J.~Evans}
\date{\today}

\newcommand{\M}{\mathcal{M}}

\begin{document}

\maketitle

\begin{abstract}
We show that the marginal model for a discrete directed acyclic graph
(DAG) with hidden variables is distributionally equivalent to another 
fully observable DAG model if and only if it does not induce any non-trivial 
inequality constraints.
\end{abstract}
 
\section{Introduction}

The \emph{marginal model} of a directed acyclic graph (DAG) model with latent 
variables is defined simply as the set of distributions that are realizable 
as margins over the observed variables, from those joint distributions that
are Markov with respect 
to the whole graph and where no restrictions are placed on the 
state-space of the latents.  It was shown by \citet{evans16mdags}
that we can represent this class of models using a collection of hypergraphs 
known as mDAGs (standing for \emph{marginal} DAGs).  

Much is known about the properties of these models.  For example, in the discrete 
and Gaussian cases the models are semi-algebraic, meaning that the equalities
and inequalities that define them are all polynomials in the joint probabilities
or covariance matrix respectively.  The equality constraints in the discrete case 
are understood \citep{evans18complete},
and there are methods for finding (in principle) all inequality constraints
as well \citep{wolfe19inflation, navascues20inflation}.  However, it is 
still an important open problem to determine whether or not two marginal
models are equivalent. 

A specific question that may be of interest in this respect, is whether or 
not the marginal
model of a DAG with observed variables $V$ and latent variables $L$ 
is \emph{distributionally equivalent} to another DAG over only $V$.  In other words,
is the set of distributions that is in the marginal model defined by a subset 
of variables in one DAG identical to the entire model defined by some other
DAG? 
The question of understanding \emph{distributional equivalence classes} 
of models is a significant open problem, and is a critical component of 
causal model search.  We cannot hope to choose between two models from data 
if they are distributionally equivalent, so any contribution to 
understanding when this occurs is extremely useful.  In addition, for the 
purpose of finding the most efficient influence function in semiparametric 
statistics, for example, this is much easier if the model is known
to be (equivalent to) a DAG model, because the tangent cone can be
easily decomposed into pieces that correspond to each variable conditional
precisely upon its parents \citep[Section 4.4]{tsiatis06semiparametric}.  

We show in this paper that, if the observed variables are all discrete, 
this is true if and only if the marginal model does not 
induce any inequality constraints, beyond those already implied by the 
required equality constrains and the necessity of probabilities being 
non-negative.  This main result is stated in the following theorem; note 
that $\M(\G)$ denotes the collection of distributions that satisfy the 
marginal Markov property (Definition \ref{dfn:marginal}) with respect to 
the mDAG $\G$ (Definition \ref{dfn:mDAG}). 

\begin{thm} \label{thm:main}
Let $\G$ be an mDAG with vertices $V$, inducing a model $\M(\G)$ over a collection 
of discrete random variables $X_V$.  Then there exists a DAG $\mathcal{H}$ 
such that $\M(\G) = \M(\mathcal{H})$ if and only if $\M(\G)$ is described entirely 
by probability distributions that satisfy a finite number of equality constraints. 
\end{thm}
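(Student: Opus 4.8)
The plan is to prove the two directions separately, with the ``only if'' direction being routine and the ``if'' direction carrying the real content.

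For the ``only if'' direction, suppose $\M(\G) = \M(\mathcal{H})$ for some DAG $\mathcal{H}$. I would invoke the standard fact that a discrete DAG model is exactly the set of distributions in the probability simplex satisfying the (finitely many) conditional independences of its local Markov property: any valid distribution satisfying these independences factorises as $\prod_v P(x_v \mid x_{\pa(v)})$, and conversely. Each such independence is equivalent to a finite system of polynomial equalities in the joint probabilities (obtained by clearing denominators), so $\M(\mathcal{H})$, and hence $\M(\G)$, is cut out of the simplex by finitely many equalities, with no proper inequality constraints beyond non-negativity. This settles one direction.

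For the ``if'' direction, the first step is to reformulate the hypothesis. By the completeness of the equality constraints in the discrete case \citep{evans18complete}, the model $\mathcal{N}(\G)$ defined by \emph{all} the equality constraints of $\M(\G)$ is the nested Markov model, and $\M(\G) \subseteq \mathcal{N}(\G)$ always holds. If $\M(\G)$ is itself described by finitely many equalities, each of these is a valid constraint of the model and therefore already holds on $\mathcal{N}(\G)$; hence $\mathcal{N}(\G) \subseteq \M(\G)$, giving $\M(\G) = \mathcal{N}(\G)$. Thus the hypothesis is equivalent to the marginal model coinciding with its nested Markov model, and it suffices to construct a DAG $\mathcal{H}$ with $\M(\mathcal{H}) = \mathcal{N}(\G)$. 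To build $\mathcal{H}$, I would proceed by induction along a topological order of $\G$, peeling off a sink district $D$ together with its tail. The nested factorisation writes the model as a variation-independent product of kernels, one per district, so it is enough to realise the kernel attached to $D$ by a directed structure and then recurse on the graph with $D$ removed; splicing the resulting local DAGs together along the topological order should yield an acyclic $\mathcal{H}$ whose conditional independences reproduce the nested constraints of $\G$.

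The main obstacle is the local realisability claim underlying this construction: that once $\M(\G) = \mathcal{N}(\G)$, the confounding inside each district induces no inequality constraints, so its kernel is Markov equivalent to a DAG on the vertices of $D$ with the tail adjoined as additional parents. This amounts to ruling out the irreducibly confounded configurations (the bidirected four-cycle and its relatives, carrying genuine inequalities such as the instrumental or Bell-type bounds) that obstruct a directed representation, and requires showing that any distribution satisfying all the equality constraints of such a district is in fact realisable by latent variables of unrestricted support. I expect the accompanying bookkeeping---verifying that the spliced DAG neither loses any equality constraint of $\mathcal{N}(\G)$ nor imposes a spurious one, while remaining acyclic across districts---to be the most delicate part of the argument.
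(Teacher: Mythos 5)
Your ``only if'' direction and your opening reduction are fine: by the completeness result of \citet{evans18complete}, the equality constraints of $\M(\G)$ cut out exactly the nested Markov model $\mathcal{N}(\G)$, so the hypothesis that $\M(\G)$ is described by finitely many equalities is indeed equivalent to $\M(\G) = \mathcal{N}(\G)$, and the task becomes producing a DAG $\mathcal{H}$ with $\M(\mathcal{H}) = \mathcal{N}(\G)$. The paper makes the same reduction. But from that point on your proposal defers, rather than proves, the entire content of the theorem. The ``local realisability claim'' you flag as the main obstacle --- that when $\M(\G) = \mathcal{N}(\G)$ each district's kernel is Markov equivalent to a DAG, because the irreducibly confounded configurations carrying Bell- or instrumental-type inequalities cannot occur --- is not a bookkeeping step; it \emph{is} the theorem, restated one district at a time. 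You give no mechanism for establishing it: no argument for why a confounded structure that is not DAG-representable must induce a non-trivial inequality on $\M(\G)$ (so that the hypothesis $\M(\G)=\mathcal{N}(\G)$ excludes it), and no argument for why the absence of such structures suffices for a global DAG representation. Your splicing construction also needs, and lacks, a proof that inequality constraints of a single district kernel lift to inequality constraints of the full marginal model, and that no non-trivial \emph{nested} constraint survives your hypothesis (nested constraints are exactly the case where districts interact through fixing, and your district-by-district induction does not address them).

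For comparison, the paper fills precisely these holes with three pieces of machinery that have no counterpart in your sketch: (i) Proposition \ref{prop:noDAGm}, showing that a non-trivial nested constraint forces a conditional independence structure (after the final fixing) that no DAG can represent faithfully; (ii) Proposition \ref{prop:zhang}, via Lemma 3.3.4 of \citet{zhang:thesis}, showing that the conditional independence structure is DAG-representable if and only if the PAG $[\G]$ has no bidirected edge; and (iii) Proposition \ref{prop:bidi_ineq}, which classifies \emph{why} a bidirected edge can appear in a PAG (a locally unshielded collider path of length 3, or a discriminating path) and shows each cause produces a genuine inequality --- the CHSH inequality for the configurations of Figure \ref{fig:bell}(i)--(ii), the Fritz-trick reduction to CHSH for (iii)--(iv), and an e-separation inequality for the discriminating-path case of Figure \ref{fig:disc}(i). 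It is this case analysis, carried out on the PAG globally rather than district by district, that converts ``not DAG-representable'' into ``carries a non-trivial inequality'' and hence closes the contrapositive. Without an argument of this kind your induction has nothing to run on, so the proposal as it stands has a genuine gap at its central step.
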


The `only if' direction is trivial, since DAG models do not
imply any inequalities, and are defined entirely by a finite list of ordinary 
conditional independences.  

In Section \ref{sec:graphs} we present necessary concepts relating to 
DAGs and mDAGs, including distributional equivalence.  In Section 
\ref{sec:nested} we introduce the `nested' Markov model, and show that any 
model with a non-trivial nested constraint can be reduced to a model with only 
standard conditional independences that 
are not consistent with any DAG.  In Section \ref{sec:main} we prove our 
main result, and in Section \ref{sec:cont} we consider possible extensions 
to continuous random variables. 

\section{Basics concepts for DAGs and mDAGs} \label{sec:graphs}

We consider mixed (hyper)graphs with one set of vertices $V$, and (up to) 
two edge sets $\mathcal{D}$ and $\mathcal{B}$; the set $\mathcal{D}$ 
contains ordered pairs of vertices, and $\mathcal{B}$ is a simplicial 
complex over the set $V$.

\begin{dfn} \label{dfn:mDAG}
In a \emph{directed graph} $\G = (V,\mathcal{D})$, if $(v,w) \in \mathcal{D}$ 
then we write $v \to w$ and say that $v$ is a \emph{parent} of $w$, and $w$ a  
\emph{child} of $v$.  The set of parents of $w$ in $\G$ is denoted by 
$\pa_\G(w)$.  A \emph{directed walk with length $k$} is 
a sequence of vertices $v_0,\ldots,v_k$ such that each $v_i$ is a parent 
of $v_{i+1}$.  A directed graph is said to be \emph{acyclic} if there 
are no directed walks of length $k \geq 1$ from any vertex back to itself; 
we call such an object a \emph{directed acyclic graph} (DAG). 

An \emph{mDAG} is a DAG $(V,\mathcal{D})$ together with a simplicial 
complex $\mathcal{B}$ over $V$.  
We refer to the entries of $\mathcal{B}$ as \emph{bidirected faces}, and 
the maximal entries as \emph{bidirected facets}.  If a face contains two 
vertices we may also call it a \emph{bidirected edge}.
\end{dfn}

An example of an mDAG consisting of a DAG with 4 edges and the bidirected facets 
$\{a,b\}$, $\{a,c,e\}$ and $\{d,f\}$ is shown in Figure \ref{fig:mDAG}(i).  Note 
that we use blue to draw directed edges, and red for the bidirected facets.  

\subsection{Marginal models}

We first define what it means for a distribution to be Markov with respect 
to a DAG.

\begin{dfn} \label{dfn:lmp}
A distribution $p$ over random variables $X_V$ is said to be 
\emph{Markov} with respect to a directed acyclic graph $\G$ if
there is a topological ordering $\prec$ of $V$ such that
\begin{equation*}
    X_v \indep X_{\pre(v; \prec) \setminus \pa(v)} \mid X_{\pa(v)}  \text{ under }p
\end{equation*}
for each $v \in V$, where $\pre_\G(v; \prec) = \{w \in V : w \prec v\}$.
\end{dfn}

Note that we omit the subscripts on operators when they are themselves
written in a subscript and the meaning is clear. 
We remark that if Definition \ref{dfn:lmp} holds for one topological
ordering, then it can be shown using standard implications of 
conditional independences that 
it holds for every other topological ordering \citep{lauritzen90indep}.

%
%
%
%

Let $\G$ be an mDAG, and let $\overline{\G}$ denote
the \emph{canonical DAG} for $\G$.  That is, we replace 
each bidirected facet $B$ with a latent variable that has the
set of children $B$; see Figure \ref{fig:mDAG}(ii) for the canonical DAG 
associated with the mDAG in \ref{fig:mDAG}(i).  We colour the edges similarly
in the mDAG: if an edge is between two observed vertices it is blue, and
otherwise it is red.

 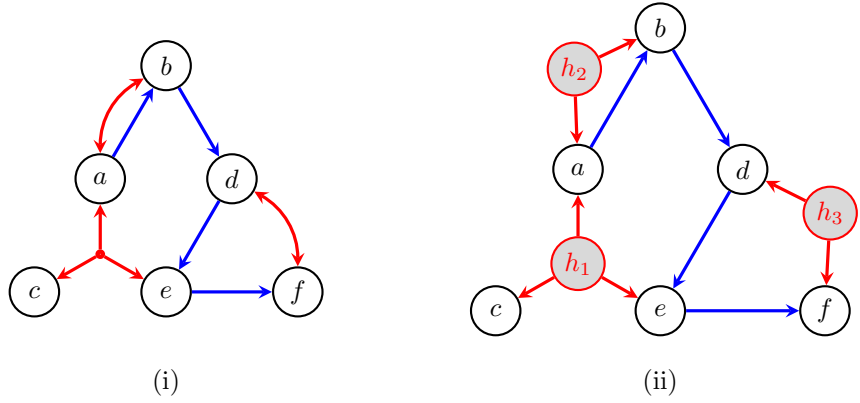
\begin{figure}
     \centering
 \begin{tikzpicture}
 [>=stealth, node distance=20mm]
 \pgfsetarrows{latex-latex}
 \begin{scope}
 \node (0) {};
  \node[rv] at (150:1) (1a) {$a$};
   \node[rv] at (90:2) (2a) {$b$};
  \node[rv] at (30:1) (4a) {$d$};
  \node[rv] at (270:1) (5a) {$e$};
  \node[rv] at (210:2) (3a) {$c$};
  \node[rv] at (330:2) (6) {$f$};
   \node[draw, circle, color=red, inner sep=0.25mm, ultra thick] (h) at (210:1.0) {};
  \draw[deg] (2a) -- (4a);
  \draw[deg] (1a) -- (2a);
  \draw[deg] (5a) -- (6);
  \draw[deg] (4a) -- (5a);
  \draw[degl] (h) -- (3a);
  \draw[degl] (h) -- (5a);
  \draw[degl] (h) -- (1a);
  \draw[beg] (4a) to[bend left] (6);
 \draw[beg] (1a) to[bend left] (2a);
 \node[below of=5a, yshift=8mm] {(i)};
 \end{scope}
 \begin{scope}[xshift=6.5cm]
 \node (0) {};
  \node[rv] at (150:1.25) (1a) {$a$};
   \node[rv] at (90:2.5) (2a) {$b$};
  \node[rv] at (30:1.25) (4a) {$d$};
  \node[rv] at (270:1.25) (5a) {$e$};
  \node[rv] at (210:2.5) (3a) {$c$};
  \node[rv] at (330:2.5) (6) {$f$};
   \node[lv] (h1) at (210:1.25) {$h_1$};
   \node[lv, position=60:.4 from 1a, xshift=-6mm, yshift=3.75mm] (h2) {$h_2$};
   \node[lv, position=-60:.4 from 4a, xshift=6mm, yshift=3.75mm] (h3) {$h_3$};
  \draw[deg] (2a) -- (4a);
  \draw[deg] (1a) -- (2a);
  \draw[deg] (4a) -- (5a);
  \draw[deg] (5a) -- (6);
  \draw[degl] (h1) -- (3a);
  \draw[degl] (h1) -- (5a);
  \draw[degl] (h1) -- (1a);
  \draw[degl] (h2) -- (1a);
  \draw[degl] (h2) -- (2a);
  \draw[degl] (h3) -- (4a);
  \draw[degl] (h3) -- (6);
 \node[below of=5a, yshift=10.5mm] {(ii)};
 \end{scope}
 \end{tikzpicture}
     \caption{(i) An mDAG and (ii) its canonical DAG.  Note that (i) is also the 
     latent projection of (ii) over $\{a,b,c,d,e,f\}$.} 
     \label{fig:mDAG}
 \end{figure}

\begin{dfn} \label{dfn:marginal}
We define the \emph{marginal model} for $\G$ as the set of
distributions that can be obtained as a margin over the observed 
variables in $\G$ under a distribution that is Markov with respect 
to $\overline{\G}$.  This set of distributions is denoted $\M(\G)$.
\end{dfn}

This model is defined in \citet{evans16mdags}, and its 
properties and the sufficiency of mDAGs for representing 
such models are laid out more fully in that paper.
We remark that the state-space of the latent variables is
in principle arbitrary, but that a uniform random variable 
on $(0,1)$ always has sufficiently large cardinality.  A result of
\citet{rosset18universal} shows that if all the variables are 
discrete with a finite state-space, then there is a corresponding
finite bound on the cardinality of the latent variables.

\subsection{Distributional and Markov equivalence}

Given an mDAG, one can read off the conditional independences that are 
satisfied by distributions that are Markov to it using \emph{m-separation}.  
For readers familiar with d-separation in directed graphs, it is essentially
the same; like d-separation it is based on whether there is an \emph{open path}
between two variables, or whether all such paths are \emph{blocked}.  The only 
modification is that the definition of a collider and non-collider has to be 
expanded
to take account of bidirected facets.  The full definition is given in Appendix 
\ref{sec:msep}.  

\begin{dfn}
We say that two mDAGs $\G$ and $\G'$ are \emph{distributionally 
equivalent} if $\M(\G) = \M(\G')$.  The ordinary conditional independences
implied by $\M(\G)$ are used to define the \emph{ordinary Markov} model
for $\G$.  We say that two mDAGs are \emph{ordinary Markov equivalent}
if they imply the same set of conditional independences (i.e.~they exhibit
the same collection of m-separations.)
\end{dfn}

If two graphs are distributionally equivalent then they are also ordinary
Markov equivalent.  See Proposition \ref{prop:subequiv} for a comparison 
between these two models, as well as the `nested' Markov model (see Section 
\ref{sec:nested}).

\begin{exm} \label{exm:const}
Consider the mDAG $\G$ shown in Figure \ref{fig:exm}(i).  We can see that 
$a \perp_m c \mid b$ and so therefore if $p \in \M(\G)$ it holds that
$X_a \indep X_c \mid X_b$.  Note that there is no way to m-separate 
$a$ and $d$ in this graph, because there is a directed path via $b$ 
and $c$, and if we condition on either of these vertices then
a path $a \to b \lrarr d$ will be opened up.  

In fact there \emph{is} a constraint between $X_a$ and $X_d$, but it is only
revealed after fixing the vertex $c$ (see Sections \ref{sec:nested} and \ref{sec:fix} for more detail); this yields the graph in (ii), 
which shows that now $d \perp_m a \mid c$, so there is a \emph{nested} constraint:
$X_d \indep X_a \mid X_c$ after fixing $X_c \mid X_b$.

In addition, the model implied by the graph in Figure \ref{fig:exm}(ii) 
contains an inequality 
constraint, being the Clauser-Horne-Shimony-Holt (CHSH) inequality 
\citep{clauser69proposed}.  This says that 
if (for example) all four variables take values in $\{-1,+1\}$, then
\begin{align}
    -2 &\leq \E[ X_b X_d \mid X_a = -1, X_c = +1] + \E[ X_b X_d \mid X_a = +1, X_c = -1]\nonumber\\
    & \quad \mathbin{+} \E[ X_b X_d \mid X_a = -1, X_c = -1] - \E[ X_b X_d \mid X_a = +1, X_c = +1] \leq 2.  \label{eqn:chsh}
\end{align}
Note however that a distribution exists satisfying the two independences 
given, but for which this quantity in (\ref{eqn:chsh}) attains the value 4: set $P(X_b = -X_d = \pm 1) = \frac{1}{2}$ if $X_a = X_c = +1$, and $P(X_b = X_d = \pm 1) = \frac{1}{2}$ otherwise, 
and one can verify that the two independences mentioned are satisfied, and that each
term in (\ref{eqn:chsh}) has the value +1.  In this sense
the inequalities are \emph{non-trivial}, because they are not implied by any 
of the equality constraints.

\begin{figure}
    \centering
\begin{tikzpicture}[>=stealth, node distance=17.5mm]
    \begin{scope}
        \node[rv] (1) {$a$};
        \node[rv, right of=1] (2) {$b$};
        \node[rv, right of=2] (3) {$c$};
        \node[rv, right of=3] (4) {$d$};
        \draw[deg] (1) to (2);
        \draw[deg] (2) to (3);
        \draw[deg] (3) to (4);
        \draw[beg] (2) to[bend left] (4);
        \node[below of=2, yshift=8mm, xshift=8.75mm] {(i)};
    \end{scope}
    \begin{scope}[xshift=8cm]
        \node[rv] (1) {$a$};
        \node[rv, right of=1] (2) {$b$};
        \node[rv, right of=2] (3) {$c$};
        \node[rv, right of=3] (4) {$d$};
        \draw[deg] (1) to (2);
        \draw[deg] (3) to (4);
        \draw[beg] (2) to[bend left] (4);
        \node[below of=2, yshift=8mm, xshift=8.75mm] {(ii)};
    \end{scope}
    \begin{scope}[yshift=-3cm, xshift=4cm]
        \node[rv] (1) {$a$};
        \node[rv, right of=1] (2) {$b$};
        \node[rv, right of=2] (3) {$c$};
        \node[rv, right of=3] (4) {$d$};
        \node[lv, inner sep=0.1mm, above of=3, yshift=-7.5mm] (L) {$h$};
        \draw[deg] (1) to (2);
        \draw[deg] (2) to (3);
        \draw[deg] (3) to (4);
        \draw[degl] (L) to (4);
        \draw[degl] (L) to (2);
        \node[below of=2, yshift=8mm, xshift=8.75mm] {(iii)};
    \end{scope}

\end{tikzpicture}
    \caption{(i) An mDAG exhibiting all three kinds of constraint: a 
    conditional independence ($X_a \indep X_c \mid X_b$), a nested conditional
    independence ($X_d \indep X_a \mid X_c$ after fixing $X_c \mid X_b$), and 
    an inequality constraint (see Example \ref{exm:const}).  (ii) The 
    graph from (i) after $c$ has been fixed.  (iii) The canonical DAG for the
    mDAG in (i).}
    \label{fig:exm}
\end{figure}
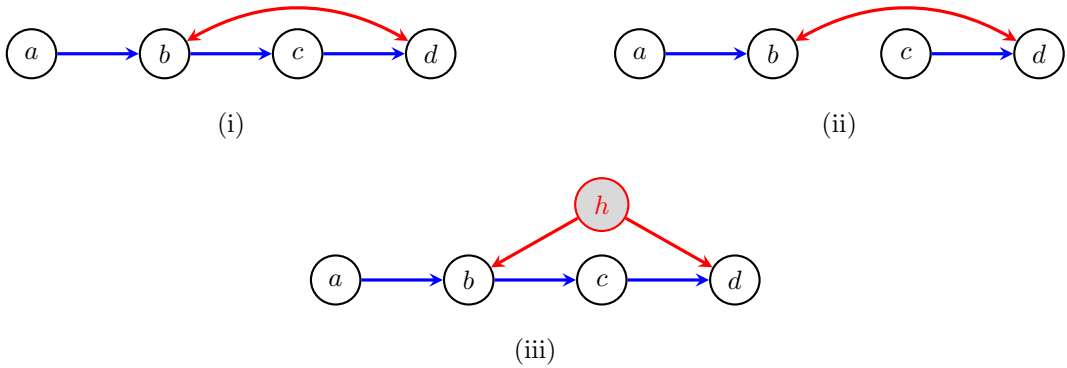

\end{exm}

\begin{rmk}
We can also read off some inequalities using a generalization of m-separation called
\emph{e-separation} \citep{evans:12}; this involves first deleting a set of variables 
$D$, and then checking for m-separation in the resulting graph.  If $A \perp_m B \mid C$ 
in the graph $\G$ after removing the vertices $D$ (and all edges incident to vertices in $D$) 
we denote it by $A \perp_e B \mid C \del D$; see 
Appendix \ref{sec:rvci} for more details on the resulting constraints. 
The \emph{instrumental inequality} of \citet{pearl95testability} can be
read off using this criterion, although (\ref{eqn:chsh}) cannot. 
\end{rmk}

\subsection{Equivalence}


Here we give some examples of (non-)equivalence of the marginal models for different
mDAGs.  Consider the graphs in Figure \ref{fig:equiv}, which are all ordinary Markov 
equivalent; the mDAGs in (i) and (ii) can be shown to be equivalent to the DAG in (iii).  

For (i), note that the only constraint in (iii) is that $X_a, X_c \indep X_d$.  This
can clearly be achieved by (i) just by setting the implied latent variable to tell 
$a$ and $c$ what values they each take, and then pass this information onto $b$.  
Since $X_a$ and $X_c$ are determined jointly, this clearly allows any distribution such 
that the constraint holds to be attained in the model for the graph in (i). 

For (ii), first note that it is clearly 
equivalent for the (implied) latent variable between $b$ and $d$ to simply contain
the value of $X_d$.  Hence the edge between $b$ and $d$ can be the same as in (i) and (iii).
Then, similarly, the latent variable between $a$ and $c$ can just contain $X_a$, so 
again we can replace it with a directed edge as in (iii).  Now, for the final bidirected
edge between $b$ and $c$, note that $b$ needs to know what value $c$ will take; this can be
arranged by making the latent variable be a map telling $X_c$ what to do for each value of 
$X_a$.  If this information is passed to $b$, then (since it can see $X_a$ directly) it 
can compute what $X_c$ must be.  Hence, we obtain equivalence between the two models. 

The graph in (iv) is \emph{not} equivalent to the other three, because the induced subgraph
over $\{a,b,c\}$ implies an inequality constraint \citep{fritz12beyond,evans16mdags}. 

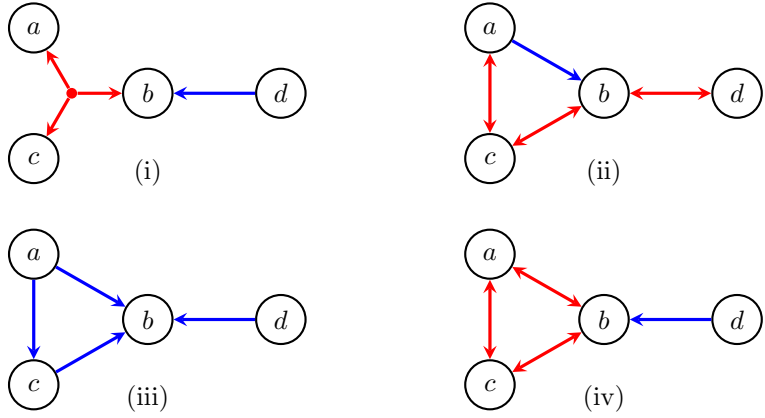
\begin{figure}
    \centering
\begin{tikzpicture}[>=stealth, node distance=17.5mm]
    \begin{scope}
        \node[nv] (0) {};
        \node[rv] (1) at (120:10mm) {$a$};
        \node[rv] (2) at (0:10mm)  {$b$};
        \node[rv] (3) at (240:10mm)  {$c$};
        \node[rv, right of=2] (4) {$d$};
        \draw[degl] (0) to (1);
        \draw[degl] (0) to (2);
        \draw[degl] (0) to (3);
        \draw[deg] (4) to (2);
        \node[below of=2, yshift=7mm, xshift=0mm] {(i)};
    \end{scope}
    \begin{scope}[xshift=6cm]
        \node (0) {};
        \node[rv] (1) at (120:10mm) {$a$};
        \node[rv] (2) at (0:10mm)  {$b$};
        \node[rv] (3) at (240:10mm)  {$c$};
        \node[rv, right of=2] (4) {$d$};
        \draw[deg] (1) to (2);
        \draw[beg] (1) to (3);
        \draw[beg] (3) to (2);
        \draw[beg] (2) to (4);
        \node[below of=2, yshift=7mm, xshift=0mm] {(ii)};
    \end{scope}
    \begin{scope}[xshift=0cm, yshift=-3cm]
        \node (0) {};
        \node[rv] (1) at (120:10mm) {$a$};
        \node[rv] (2) at (0:10mm)  {$b$};
        \node[rv] (3) at (240:10mm)  {$c$};
        \node[rv, right of=2] (4) {$d$};
        \draw[deg] (1) to (2);
        \draw[deg] (1) to (3);
        \draw[deg] (3) to (2);
        \draw[deg, <-] (2) to (4);
        \node[below of=2, yshift=7mm, xshift=0mm] {(iii)};
    \end{scope}
    \begin{scope}[xshift=6cm, yshift=-3cm]
        \node (0) {};
        \node[rv] (1) at (120:10mm) {$a$};
        \node[rv] (2) at (0:10mm)  {$b$};
        \node[rv] (3) at (240:10mm)  {$c$};
        \node[rv, right of=2] (4) {$d$};
        \draw[beg] (1) to (2);
        \draw[beg] (1) to (3);
        \draw[beg] (3) to (2);
        \draw[deg, <-] (2) to (4);
        \node[below of=2, yshift=7mm, xshift=0mm] {(iv)};
    \end{scope}

\end{tikzpicture}
    \caption{(i) and (ii) mDAGs that are equivalent to the DAG (iii). (iv) 
    is an mDAG that is \emph{not} equivalent to any DAG.}
    \label{fig:equiv}
\end{figure}

\section{Nested Markov model} \label{sec:nested}

As we have seen in Example \ref{exm:const}, there are two types of 
\emph{equality} constraint that can be obtained 
in an mDAG.  The first is an ordinary conditional independence, and the
second is a (strictly) \emph{nested} constraint, which is a conditional 
independence that arises only after probabilistically `fixing' some of the 
other variables.  We now define this operation more formally. 

\begin{dfn} \label{dfn:mb}
    The \emph{Markov blanket} of a vertex $v$ in an mDAG $\G$ is the set of 
    (other) vertices $w$ that can 
    be reached by a walk whose internal vertices are all \emph{colliders}, and 
    such that the first edge has an arrowhead into $v$; that is
    $w \to v$ or $w \lrarr \cdots \lrarr v$ or $w \to \lrarr \cdots \lrarr v$ 
    (where $w \lrarr v$ is shorthand for $v,w$ being contained in the same 
    bidirected facet.) 
    We denote this set by $\mbl_\G(v)$. 

    We say a vertex $v$ is \emph{fixable} (in $\G$) if it has
    no strict descendants (i.e.~vertices that can be reached by a directed walk 
    from $v$) that can also be reached by walks over only bidirected edges.

    Given such a vertex, we can \emph{fix} it in the graph by removing all 
    incoming edges (whether directed or bidirected), but keeping any directed
    edges oriented \emph{out} of $v$; let this new graph be $\G^*$.  
    Probabilistically, we compute 
    \begin{align*}
        p^*(x_V) &= \frac{p^*(x_v)}{p(x_v \cmid x_{\mbl(v)})} \cdot p(x_V),
    \end{align*}
    where $p^*(x_v)$ is an arbitrary strictly positive marginal density over $\X_v$.
\end{dfn}

Results from \citet{richardson23nested} tell us that if $p$ is in the marginal
model for $\G$, then $p^*$ will be in the marginal model for $\G^*$.  Hence any 
non-trivial constraints we deduce on $p^*$ must also apply to $p$.  Note
that the definition of a Markov blanket given here does not include paths 
beginning $v \to$, which is common in other papers; 
this is crucial in order to give the correct definition of the fixing operation. 

\subsection{Nested models are not DAG-like}

In this section we show that any non-trivial nested constraint in an 
mDAG $\G$ will imply that the conditional independence model after fixing cannot 
be represented by any DAG.  

\begin{prop} \label{prop:noDAGm}
Suppose that fixing a vertex $v$ from an mDAG $\G$ leads directly to a 
non-trivial nested constraint.  
Then the conditional independence model implied by $\G$ after the fixing is 
not faithfully represented by any DAG model. 
\end{prop}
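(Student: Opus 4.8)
The plan is to argue by contradiction: suppose the m-separation model of the graph $\G^*$ obtained by fixing $v$ coincides with the d-separation model of some DAG $\mathcal{H}$ on the same vertices. The heart of the argument is to exhibit two \emph{unshielded colliders} of $\mathcal{H}$ that share a single edge and force it to carry an arrowhead at each end. Recall that the independence model alone determines both the skeleton and all unshielded colliders of a faithful DAG: two vertices are adjacent in $\mathcal{H}$ exactly when they cannot be m-separated in $\G^*$, and an unshielded triple $x - z - y$ (with $x,y$ nonadjacent) is a collider if and only if some separating set $S$ for $x$ and $y$ has $z \notin S$. It therefore suffices to locate, inside $\G^*$, four vertices $p, m, m', q$ with $m \lrarr m'$ bidirected, with $p$ adjacent to $m$ and $q$ adjacent to $m'$, with $p,m'$ and $m,q$ each m-separable (so that $p - m - m'$ and $m - m' - q$ are unshielded), and such that $p \perp_m m'$ holds under a set avoiding $m$ and $m \perp_m q$ holds under a set avoiding $m'$.

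First I would use the hypothesis to localise this configuration. A non-trivial nested constraint means there is an m-separation $A \perp_m B \mid C$ that holds in $\G^*$ but fails in $\G$; hence in $\G$ some $A$--$B$ path was active given $C$ and has been blocked by the fixing. Since fixing only deletes edges with an arrowhead into $v$, the activation must have relied on a collider $m$ on that path whose only route into $C$ ran through a directed path to $v$, so that severing $v$'s incoming edges de-activates it. Because the post-fixing constraint is \emph{non-trivial}, this collider cannot be a pure ``directed'' collider $\,\cdot \to m \gets \cdot\,$ (a short computation with the reweighting formula of Definition \ref{dfn:mb} shows such a collider yields only constraints already implied by the ordinary independences of $\G$); it must involve a bidirected edge $m \lrarr m'$. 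Tracing the two sides of the formerly active path then produces the neighbours $p$ (into $m$) and $q$ (into $m'$), with $q$ typically being $v$ itself, now rendered a parent of $m'$; the prototype is exactly Figure \ref{fig:exm}(ii), where $(p,m,m',q) = (a,b,d,c)$.

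Given this configuration, the contradiction is immediate. The triple $p - m - m'$ is unshielded (as $p,m'$ are m-separable) and is a collider, since $p \perp_m m'$ holds for a set not containing $m$; likewise $m - m' - q$ is unshielded and a collider, since $m \perp_m q$ holds for a set not containing $m'$. A faithful $\mathcal{H}$ must therefore orient $p \to m \gets m'$ and $m \to m' \gets q$ simultaneously, forcing the edge between $m$ and $m'$ to point from $m'$ to $m$ and from $m$ to $m'$ at once, which is impossible. Hence no such $\mathcal{H}$ exists, and the model is not faithfully represented by any DAG.

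The main obstacle is the localisation step in full generality. In the prototype the offending structure is a single bidirected edge flanked by two parents, but for an arbitrary $\G$ the two colliders may be joined by a longer bidirected path $m_1 \lrarr \cdots \lrarr m_k$, the shielding parents may sit at the ends of directed paths, and the sets $A,B,C$ may be large. I would handle this by taking the nested constraint and its separating sets to be minimal, then isolating one internal bidirected edge $m_i \lrarr m_{i+1}$ whose two flanking triples remain unshielded and whose endpoints are both forced to be colliders, recovering the same ``edge oriented both ways'' contradiction. The care lies precisely in verifying the unshieldedness (the required m-separations among $p,q$ and the $m_j$) and in confirming, via the reweighting formula, that non-triviality of the constraint is exactly what excludes the all-directed collider that would otherwise be DAG-compatible.
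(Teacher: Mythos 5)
Your deductive core is sound as far as it goes, and it is a genuinely different route from the paper's. You aim to exhibit two overlapping unshielded colliders in the post-fixing independence model, which would force the shared edge of a faithful DAG $\mathcal{H}$ to carry arrowheads at both ends; in the prototype of Figure \ref{fig:exm}(ii) your configuration $(p,m,m',q)=(a,b,d,c)$ does exactly this. The paper's proof never analyses adjacencies or collider orientations at all: it exploits the independence $X_v \indep X_{\mbl(v)}$ that the reweighting artificially creates, pairs it with the pattern ``$a \indep b \mid D$ yet $a \not\indep b \mid D \cup \{s\}$'' for a de-activated collider $s$ satisfying $v \indep s$, and refutes DAG-faithfulness of that pattern by a topological-ordering argument (Lemma \ref{lem:order} and Proposition \ref{prop:noDAG}).

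The genuine gap is at exactly the step you flag as ``the main obstacle'': the existence of your configuration is never established, and two independent pieces are missing. First, the claim that non-triviality forces the de-activated collider to sit on a bidirected edge $m \lrarr m'$ is only asserted (``a short computation\ldots shows''); no such computation appears, and it is not routine, since whether the reweighted constraint is trivial depends on the district and ancestral structure around $v$, not just on the two edges meeting at one collider. Second, and more seriously, even granting the bidirected edge, your contradiction needs both flanking triples $p - m - m'$ and $m - m' - q$ to be \emph{unshielded}, with separating sets avoiding the middle vertices. Your minimality device only shortens bidirected paths; it does nothing to remove shielding adjacencies, and shielding is a real phenomenon here: the paper's own analysis (Proposition \ref{prop:bidi_ineq}, Figure \ref{fig:disc}, Proposition \ref{prop:disc_path2}) shows that an independence model can force a bidirected edge --- hence be non-DAG-representable --- while containing \emph{no} locally unshielded collider path of length 3, the obstruction being a discriminating path. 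In that regime the double-unshielded-collider clash you rely on simply does not exist, and one needs the discriminating-path orientation rule instead, which your proposal does not contain. To repair the argument you would either have to prove that a single fixing which directly produces a non-trivial constraint can never yield only shielded (discriminating-path) configurations --- itself a nontrivial unproven claim --- or add that case as a separate branch. The paper's route sidesteps all of this precisely because its obstruction in Proposition \ref{prop:noDAG} makes no reference to adjacency or unshieldedness.
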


\begin{proof}
%
When we fix $v$ we multiply by $p^*(x_v)/p(x_v \cmid x_{\mbl(v)})$, 
and so we artificially introduce the independence 
$X_v \indep X_{\mbl(v)}$ by performing the fixing. 
Let the new constraint be $X_A \indep X_B \mid X_C$, where each of $A$, 
$B$ and $C$ are chosen to be 
inclusion minimal; that is, if any vertex is removed from $A$ or $B$ then
the independence also held in some form before the fixing (possibly with
a different conditioning set), and if from $C$ 
then the required m-separation no longer holds in the new graph. 

Since the new constraint $X_A \indep X_B \mid X_C$ is non-trivial, 
it cannot have been induced just by deleting paths through $v$, so 
there exists a path $\pi$ from $a \in A$ to $b \in B$ \emph{not} through 
$v$, that was previously open given $A' \cup B' \cup C$, but is now 
blocked (here $A' = A\setminus \{a\}$ and $B' = B\setminus \{b\}$).  
Hence there is a set of colliders $S$ on $\pi$ that were ancestors of 
$v$ in $\G$, but are not after the fixing, and hence no longer ancestors 
of things in $A \cup B \cup C$.  

Then choose $D = A' \cup B' \cup C \cup S'$, where $S'$ is a maximal subset 
such that $a \perp_m b \mid A' \cup B' \cup C \cup S'$ in $\G^*$, but not 
if we add in another element $s \in S \setminus S'$.  Clearly 
$S \setminus S' \neq \emptyset$ 
from the discussion in the previous paragraph.  Now we can apply 
Proposition \ref{prop:noDAG} to obtain the result. 
%
%
\end{proof}

\section{mDAGs without nested constraints} \label{sec:main}

Now, we need only prove that models whose equality constraints are equivalent 
to those of an mDAG model (and not ordinary Markov equivalent to a conditional 
DAG model) will induce 
some sort of non-trivial inequality in their marginal model.  We can do this 
by assuming that we consider the `final' fixing to reveal a non-trivial
nested constraint, and then look at the independence model that this induces.

\subsection{Partial ancestral graphs}

If $\G$ does not have any nested constraints, then we consider its 
\emph{partial ancestral graph} (PAG) $[\G]$, which represents precisely the
ordinary conditional independence constraints implied by $\G$.  There is a 
one-to-one correspondence between PAGs and conditional independence models
induced by mDAGs \citep{richardson02ancestral, richardson03causal}. 
PAGs are ordinary mixed graphs (i.e.~they do not contain hyper-edges) with 
three edge markings: a tail, an arrowhead and a circle;  
a circle means that at least one \emph{maximal ancestral graph} (MAG) in the 
equivalence class has a tail mark here, and at least one has an 
arrowhead.  See Figures \ref{fig:bell} and
\ref{fig:disc} for some examples.  
More details about MAGs and PAGs are given in Appendices 
\ref{sec:anc_proj} and \ref{sec:pag}.  The crucial fact here is that
the conditional independence structure of any mDAG can always be 
represented by a MAG, and therefore by a PAG.

\begin{prop} \label{prop:zhang}
Suppose that $\mathcal{P} = [\G]$.  Then the conditional independence
structure of $\G$ is the same as that of a DAG if and only if $\mathcal{P}$ 
does not have any bidirected edges. 
\end{prop}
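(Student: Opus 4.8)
The plan is to reduce the statement to a question about the Markov equivalence class of maximal ancestral graphs (MAGs) represented by $\mathcal{P}$, and then treat the two directions separately. First I would record the key reduction: the conditional independence structure of $\G$ is that of a DAG if and only if the equivalence class represented by $\mathcal{P}$ contains a DAG. Indeed, if some DAG $\mathcal{H}$ has the same conditional independences as $\G$, then $[\mathcal{H}] = [\G] = \mathcal{P}$, so $\mathcal{H}$ lies in the class; conversely, any DAG in the class has, by definition, the same conditional independences as $\G$. I would also recall the structural fact that a DAG is exactly a MAG all of whose edges are directed: being ancestral it has no directed cycles, and being maximal it is then automatically a DAG with m-separation reducing to d-separation. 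Finally, since $\G$ is an mDAG (latents are marginalised, never conditioned on as selection variables), neither $\mathcal{P}$ nor any MAG in its class contains undirected edges, so the only possible non-directed edges are bidirected. Hence the claim becomes: the class of $\mathcal{P}$ contains a MAG with no bidirected edges if and only if $\mathcal{P}$ has no bidirected edges.

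The `only if' direction is immediate. A bidirected edge in $\mathcal{P}$ is, by the definition of a PAG, an edge carrying an invariant arrowhead at both endpoints, i.e.\ an edge that is bidirected in \emph{every} MAG of the class. A DAG contains no such edge, since each of its edges has a tail at one end; so if the class contains a DAG, then $\mathcal{P}$ can have no bidirected edge.

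For the `if' direction I would construct a DAG in the class by orienting the circle marks of $\mathcal{P}$. The safe moves are to orient every circle sitting opposite an (invariant) arrowhead into a tail, turning each edge of the form $\circ\!\!\to$ into a directed edge $\to$; this creates no new arrowhead, and hence destroys no collider and introduces no new one. It remains to orient the `circle component' consisting of the edges with circles at both ends. I would orient this component acyclically and \emph{without} creating any new unshielded collider, extending the orientation already forced on the directed part; the absence of bidirected edges guarantees that no orientation is ever forced to place opposing arrowheads, so every such edge can be made directed. The resulting graph has the same skeleton as $\mathcal{P}$, the same colliders, and no directed cycles (so it is trivially ancestral and maximal); by the characterisation of Markov equivalence of MAGs \citep{richardson02ancestral} it is therefore a MAG in the class, and having only directed edges it is a DAG.

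The main obstacle is precisely this last construction: it is not enough to observe edge-by-edge that each potential bidirected edge can be avoided in \emph{some} member of the class, since we need a single MAG avoiding all of them \emph{simultaneously} while still lying in the equivalence class. The real work is thus in showing that the circle component can be oriented globally into a DAG without introducing new colliders (in particular none along discriminating paths) and without creating cycles. I expect to handle this by appealing to the chordality of the circle component of a PAG together with the soundness and completeness of the orientation rules, which certify that the proposed orientation yields a genuine member of the class rather than merely a graph carrying the correct local marks.
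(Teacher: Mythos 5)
Your proposal is correct and takes essentially the same route as the paper: your ``only if'' direction is the same invariant-arrowhead argument the paper uses, and your ``if''-direction construction (turning each ${\circ}\!\!\to$ edge into $\to$ and orienting the circle component into a DAG with no new unshielded colliders, justified by chordality and the completeness of the orientation rules) is precisely the content of Lemma 3.3.4 of \citet{zhang:thesis}, which the paper simply cites rather than re-proves. The one small overstatement---that no MAG in the class contains undirected edges (false in general, e.g.\ two adjacent vertices with no independences admit an undirected MAG)---is harmless, since your argument only needs that $\mathcal{P}$ itself carries no invariant tail--tail edge, which is true because the class contains the ancestral projection of the mDAG.
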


\begin{proof}
We know from Lemma 3.3.4 of \citet{zhang:thesis} that a PAG can always be
oriented to a MAG in such a way that it does not introduce any additional 
bidirected edges.  Hence, if there are none to start with, the model
is ordinary Markov equivalent to a DAG.  

For the converse, note that if it were false that would imply that 
the edge is bidirected in every Markov equivalent MAG, which 
contradicts the existence of a Markov equivalent DAG.
\end{proof}

Now, since the PAG represents \emph{invariant} edges (i.e.~ones that are 
the same in all members of the equivalence class), the graph is ordinary 
Markov equivalent to a DAG if and only if there are no bidirected edges in its PAG.

We say that a collider path $\langle v_0, \ldots, v_k\rangle$ is \emph{locally} 
unshielded if there is no edge between $v_i$ and $v_{i+2}$ for any 
$i=0,\ldots,k-2$.

\begin{prop} \label{prop:bidi_ineq}
Suppose that $\G$ contains no non-trivial nested constraints, and that there 
is a bidirected edge in $\mathcal{P} = [\G]$.  Then a non-trivial inequality 
constraint is induced over the distributions in $\M(\G)$.
\end{prop}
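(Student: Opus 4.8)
The plan is to show that an invariant bidirected edge forces a Bell-type sub-structure, and then to transport the Clauser--Horne--Shimony--Holt inequality of Example~\ref{exm:const} into $\M(\G)$. First I would use the invariance of the bidirected edge to extract a combinatorial witness. Write the edge as $x \lrarr y$. Since it is bidirected in every MAG in the class $\mathcal{P}$, both of its arrowheads are forced: were either endpoint free to carry a tail, Zhang's orientation rules \citep{zhang:thesis} would permit a Markov-equivalent MAG with a directed edge there, contradicting invariance. Tracing each forced arrowhead back along the collider (or discriminating) structure that certifies it, and continuing outward through any further bidirected edges whose arrowheads are themselves forced, yields a locally unshielded collider path $\pi = \langle v_0, v_1, \ldots, v_k \rangle$ whose internal edges include $x \lrarr y$, whose internal vertices are all colliders, and whose two outer marks are tails, so that $v_0 \to v_1$ and $v_{k-1} \gets v_k$. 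Because MAGs are ancestral this outward tracing cannot cycle, and so it must terminate in tails within the finite graph; I would also record that $v_0$ and $v_k$ are non-adjacent, from local unshieldedness together with minimality of the chosen path. The endpoints $v_0, v_k$ then play the role of Bell ``settings'' and $v_1, v_{k-1}$ of the measured ``outcomes''.

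Next I would pass to the sub-model. By the marginalisation property of mDAG models \citep{evans16mdags}, the latent projection of $\G$ onto the vertices of $\pi$ is an mDAG $\G'$ with $\M(\G') \supseteq \{p_{V(\pi)} : p \in \M(\G)\}$, so any constraint on $\M(\G')$ is induced on $\M(\G)$. The graph $\G'$ is a chain $v_0 \to v_1 \lrarr v_2 \lrarr \cdots \lrarr v_{k-1} \gets v_k$ of colliders fed by the two settings. Conditioning on the interior outcomes $v_2, \ldots, v_{k-2}$ (but not on $v_1$ or $v_{k-1}$) leaves $v_0 \indep v_k$, since the only connecting path is blocked at the unconditioned colliders $v_1$ and $v_{k-1}$, while it opens the bidirected chain between $v_1$ and $v_{k-1}$; conditionally, the four variables therefore realise exactly the Bell scenario $v_0 \to v_1 \lrarr v_{k-1} \gets v_k$. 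Thus the CHSH inequality~(\ref{eqn:chsh}), applied to $\E[\,\cdot \mid v_2, \ldots, v_{k-2}]$ with $(a,b,c,d) = (v_0, v_1, v_k, v_{k-1})$, holds throughout $\M(\G')$ and hence on $\M(\G)$.

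Finally I would verify that this inequality is not a consequence of the equality constraints of $\M(\G)$. It suffices to exhibit a distribution over $X_V$ that obeys every m-separation of $\G$ yet violates~(\ref{eqn:chsh}). I would take the construction of Example~\ref{exm:const} on $(v_0, v_1, v_{k-1}, v_k)$, which satisfies the independences $v_0 \indep v_{k-1}, v_k$ and $v_k \indep v_1, v_0$ while driving the relevant quantity to the value $4$, and extend it to the remaining vertices so that all conditional independences read off $\G$ are respected (for instance by making every other variable a deterministic function of its parents, consistent with a topological order). The hypothesis that $\G$ has no non-trivial nested constraint guarantees that the only equality constraints of $\M(\G)$ are these ordinary m-separations, so a distribution respecting all of them but violating~(\ref{eqn:chsh}) certifies that the inequality is genuinely new.

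The principal obstacle is the first step: turning the invariance of the bidirected edge into a clean locally unshielded collider path with tails at both ends and non-adjacent endpoints. Making the ``trace back the forced arrowheads'' argument rigorous requires a careful induction over the orientation rules, including discriminating paths, and one must check that the outward extension through successive bidirected edges terminates in tails rather than re-entering the path; the ancestral property of MAGs is what rules this out, but the bookkeeping is delicate. A secondary obstacle is confirming that the conditioning-on-interior-colliders reduction never collapses the conditional model to one carrying only an equality --- the degenerate case in which $v_0 \to v_1 \lrarr v_{k-1}$ alone would merely give $v_0 \indep v_{k-1}$ --- which is precisely where having genuine tails, hence settings, at \emph{both} ends of $\pi$ is essential.
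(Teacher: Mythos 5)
There is a genuine gap, and it sits exactly where you flagged your ``principal obstacle'': the claim that invariance of the bidirected edge can be traced outward to a locally unshielded collider path with \emph{tails} at both ends and \emph{non-adjacent} endpoints is false, and the two configurations it excludes are precisely the hard cases the paper has to handle separately. First, local unshieldedness only forbids edges between $v_i$ and $v_{i+2}$; it does not prevent $v_0$ and $v_k$ from being adjacent, and for a length-3 collider path the PAG's induced subgraph can be any of the four types in Figure~\ref{fig:bell}, three of which have the endpoints adjacent and none of which has forced tails at the ends (the end marks are circles in (i)--(iii) and arrowheads in (iv)). Your CHSH argument covers essentially only case (i); for (iii) and (iv) the paper needs the ``Fritz trick'' --- pass to the submodel in which $X_d$ carries no information beyond $X_c$, delete the $a$--$d$ edge, and argue distributional equivalence to (i) --- and your proposal contains no substitute for it. Second, and more seriously, a bidirected edge can be invariant in the PAG even when \emph{no} locally unshielded collider path of length 3 exists at all, namely when its arrowheads are forced by a discriminating path (Figure~\ref{fig:disc}); this is a separate orientation rule, not a degenerate instance of collider tracing, so your construction of $\pi$ cannot even begin there. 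The paper handles this case with Proposition~\ref{prop:disc_path2} plus an e-separation argument: the parity distribution $X_a + X_b + X_c =^2 0$ with $P(X_v=0)=1$ satisfies all the m-separations of Figure~\ref{fig:disc}(i) but violates the inequality arising from $a \perp_e \{b,c\} \mid \emptyset \del v$. Nothing in your proposal produces an inequality of this kind, and CHSH is not available in that configuration.

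Two secondary problems. Your reduction in step 2 identifies the latent projection of $\G$ onto the vertices of $\pi$ with the bare chain, but non-adjacency in the PAG only means separability given some subset of \emph{all} of $V$; after projecting onto the path vertices the separating sets may have been marginalized away, so the projection can contain additional edges, and conditioning on interior colliders takes you outside the class of mDAG marginal models in any case (the paper avoids both issues by never using paths longer than 3 in the collider case). Finally, your non-triviality argument asserts that ``the only equality constraints of $\M(\G)$ are these ordinary m-separations'' because there is no non-trivial nested constraint; that inference is legitimate in spirit (it is how the paper uses \citet{evans18complete}), but the witness distribution must be shown to satisfy \emph{all} m-separations of $\G$ after your extension ``by deterministic functions of parents,'' which is not automatic and is exactly the kind of verification the paper's induced-subgraph formulation is designed to make local.
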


\begin{proof}
There are two reasons that a bidirected edge can be included in a PAG.  
Either there is a locally unshielded collider path of length 3 from (say) $a$ to 
$d$ (see Figure \ref{fig:bell}), or there is a discriminating path of length at 
least 3 (see Figure \ref{fig:disc}).  In the first case,
the PAG must have an induced subgraph of one of the forms in Figure 
\ref{fig:bell}.  The graphs in (i) and (ii) induce the CHSH 
inequality (\ref{eqn:chsh}) \citep{bell64einstein, clauser69proposed}.

For (iii) and (iv), consider the submodel in which all information 
about $X_d$ is contained as part of $X_c$.  This means that $X_d$ must obtain 
all its information from the latent it shares with $X_c$, since either $X_c$ or
$X_d$ is marginally independent of all other variables.  Hence we can remove 
the edge between $a$ and $d$, and then note that the mDAGs become distributionally 
equivalent to Figure \ref{fig:bell}(i).  Hence this submodel induces the CHSH 
inequality, and so the whole distribution also satisfies an inequality.
%

On the other hand, suppose that there is no locally unshielded collider path of
length $k \geq 3$ but there is a discriminating path 
$\langle a,v_1,\ldots,v_k,b,c\rangle$ with $k \geq 1$.  In fact, by 
Proposition \ref{prop:disc_path2}, if these conditions are satisfied, then there will 
also be an induced subgraph that looks like Figure \ref{fig:disc}(i). 
Note that the m-separations for this subgraph imply that $X_a \indep X_b$ and 
$X_a \indep X_c \mid X_v$;
a distribution over binary variables that satisfies both of these constraints would be 
to have $X_a + X_b + X_c =^2 0$ (where $=^2$ denotes equality modulo 2), and 
$P(X_v = 0) = 1$.  However, there is also an e-separation constraint between 
$a$ and $\{b,c\}$ if we delete $v$, and the corresponding inequality constraint is 
\emph{not} satisfied by this distribution.  Hence, there is indeed a non-trivial 
inequality.
%
\end{proof}

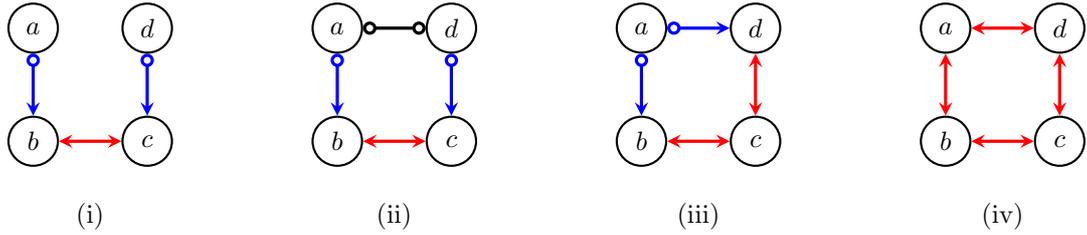
\begin{figure}
    \centering
\begin{tikzpicture}[node distance=15mm, >=stealth]
 \pgfsetarrows{latex-latex}
\begin{scope}
\node[rv] (1) {$a$};
\node[rv, below of=1] (2) {$b$};
\node[rv, right of=2] (3) {$c$};
\node[rv, right of=1] (4) {$d$};
\draw[cdeg] (1) to (2);
\draw[cdeg] (4) to (3);
\draw[beg] (2) to (3);
\node[below of=2, xshift=7.5mm, yshift=5mm] {(i)};
\end{scope}
\begin{scope}[xshift=4cm]
\node[rv] (1) {$a$};
\node[rv, below of=1] (2) {$b$};
\node[rv, right of=2] (3) {$c$};
\node[rv, right of=1] (4) {$d$};
\draw[cceg] (1) to (4);
\draw[cdeg] (1) to (2);
\draw[cdeg] (4) to (3);
\draw[beg] (2) to (3);
\node[below of=2, xshift=7.5mm, yshift=5mm] {(ii)};
\end{scope}
\begin{scope}[xshift=8cm]
\node[rv] (1) {$a$};
\node[rv, below of=1] (2) {$b$};
\node[rv, right of=2] (3) {$c$};
\node[rv, right of=1] (4) {$d$};
\draw[cdeg] (1) to (4);
\draw[cdeg] (1) to (2);
\draw[beg] (4) to (3);
\draw[beg] (2) to (3);
\node[below of=2, xshift=7.5mm, yshift=5mm] {(iii)};
\end{scope}
\begin{scope}[xshift=12cm]
\node[rv] (1) {$a$};
\node[rv, below of=1] (2) {$b$};
\node[rv, right of=2] (3) {$c$};
\node[rv, right of=1] (4) {$d$};
\draw[beg] (1) to (4);
\draw[beg] (1) to (2);
\draw[beg] (4) to (3);
\draw[beg] (2) to (3);
\node[below of=2, xshift=7.5mm, yshift=5mm] {(iv)};
\end{scope}
\end{tikzpicture}
    \caption{Up to symmetry, the four possible induced subgraphs of a PAG containing 
    a locally unshielded collider path of length 3 from $a$ to $d$.}
    \label{fig:bell}
\end{figure}

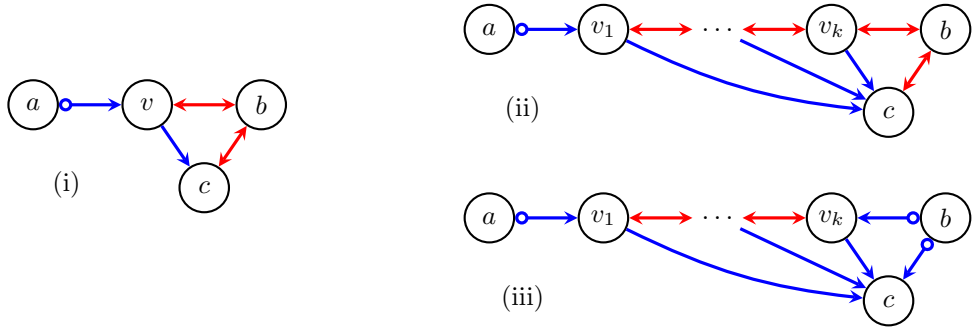
\begin{figure}
    \centering
\begin{tikzpicture}[node distance=15mm, >=stealth]
 \pgfsetarrows{latex-latex}
\begin{scope}
\node[rv] (1) {$a$};
\node[rv, right of=1] (2) {$v$};
\node[rv, right of=2] (3) {$b$};
\node[rv, below of=3, xshift=-7.5mm, yshift=4mm] (4) {$c$};
\draw[cdeg] (1) to (2);
\draw[beg] (4) to (3);
\draw[beg] (2) to (3);
\draw[deg] (2) to (4);
\node[below left of=2, xshift=0mm, yshift=0mm] {(i)};
\end{scope}
\begin{scope}[xshift=6cm, yshift=1cm]
\node[rv] (1) {$a$};
\node[rv, right of=1] (2) {$v_1$};
\node[right of=2] (2a) {$\ldots$};
\node[rv, right of=2a] (2b) {$v_k$};
\node[rv, right of=2b] (3) {$b$};
\node[rv, below of=3, xshift=-7.5mm, yshift=4mm] (4) {$c$};
\draw[cdeg] (1) to (2);
\draw[beg] (2) to (2a);
\draw[beg] (2b) to (2a);
\draw[beg] (2b) to (3);
\draw[deg] (2) to[bend right=10] (4);
\draw[deg] (2a) to[bend right=0] (4.150);
\draw[deg] (2b) to (4);
\draw[beg] (4) to (3);
\node[below left of=2, xshift=0mm, yshift=0mm] {(ii)};
\end{scope}
\begin{scope}[xshift=6cm, yshift=-1.5cm]
\node[rv] (1) {$a$};
\node[rv, right of=1] (2) {$v_1$};
\node[right of=2] (2a) {$\ldots$};
\node[rv, right of=2a] (2b) {$v_k$};
\node[rv, right of=2b] (3) {$b$};
\node[rv, below of=3, xshift=-7.5mm, yshift=4mm] (4) {$c$};
\draw[cdeg] (1) to (2);
\draw[beg] (2) to (2a);
\draw[beg] (2b) to (2a);
\draw[deg] (2b) to (4);
\draw[deg] (2) to[bend right=10] (4);
\draw[deg] (2a) to[bend right=0] (4.150);
\draw[cdeg] (3) to (4);
\draw[cdeg] (3) to (2b);
\node[below left of=2, xshift=0mm, yshift=0mm] {(iii)};
\end{scope}
\end{tikzpicture}
    \caption{Discriminating paths for $b$: (i) a path of length 3 (containing a bidirected edge); and (ii)--(iii) two 
    possible configurations of length $k+2$.}
    \label{fig:disc}
\end{figure}






The proof technique used for the graphs in Figures \ref{fig:bell}(iii) and (iv) is known 
as the `Fritz trick'\footnote{This is a term
coined by members of the Perimeter Institute, including Elie Wolfe.}, because it is a 
generalization of the approach that Tobias Fritz uses in Proposition 2.13 of
\citet{fritz12beyond}. 

\subsection{Proof of the main result}

We now have enough information to prove our main result.

\begin{proof}[Proof of Theorem \ref{thm:main}]
From the results in Section \ref{sec:nested} we know that if there is a non-trivial
nested constraint, then the set of ordinary independences induced after a final fixing
are not ones that can be represented faithfully by a DAG model.  
    
Then for such models, as well as other models without nested conditional independences, 
we can always represent the conditional independence structure by a partial ancestral graph.
If there is a necessary bidirected edge then this induces a non-trivial inequality constraint 
(Proposition \ref{prop:bidi_ineq}).  Since Proposition \ref{prop:zhang} 
tells us that  the presence of a bidirected edge in the PAG implies there is no DAG that can 
represent the equivalence class, this proves that not having a marginal model that is not Markov 
equivalent to a DAG implies the existence of a non-trivial inequality.

For the converse the result is trivial, since DAG models are defined by the finite 
list of independences in Definition \ref{dfn:lmp}.
\end{proof}

Now we have proven our main result.  Marginal DAG models can be categorized into several classes: 
(i) those which are distributionally equivalent to a DAG (Figures \ref{fig:equiv}(i)--(ii));
(ii) those with additional inequality constraints only (Figures \ref{fig:equiv}(iv));
and (iii) graphs with non-DAG-like conditional independences (Figure \ref{fig:bell}) or
(iv) graphs with nested conditional independences (Figure \ref{fig:exm}(i)), both of 
which induce inequalities.




\section{Extension to the continuous case} \label{sec:cont}

One obvious question for an extension to this paper is to ask whether or not
the result also holds in the case of variables that are not discrete. 
Bell inequalities (i.e.~ones analogous to the CHSH inequality) are known to hold 
even if all the variables are continuous \citep{cavalcanti07bell}, and indeed hold 
on arbitrary discretizations of such variables.  

However, there are obstacles to generalizing this result to the continuous case. 
The first is that the results of \citet{evans18complete} only apply to models where all
the observed variables are discrete.  Another is that results of \citet{rosset18universal} and \citet{duarte23automated} 
enable one to show that the model is semi-algebraic if observed variables have a finite 
state-space, so for continuous (or even countably infinite) state-spaces we would need 
an analogous condition.  
The final problem is that e-separation results require the distribution 
of the variables deleted to have at least one atom, even if the other variables are 
continuous.  Indeed, it is an open question 
whether inequalities are contained in models such as the one induced by the mDAG in 
Figure \ref{fig:disc}(i) when $X_v$ is continuous.  The Shannon-cone of this 
model does not induce any non-trivial entropic inequalities in that case, for example 
\citep{chaves14}. 

\subsection*{Acknowledgements}

We thank Richard Guo for suggesting the problem and Elie Wolfe for conjecturing 
the result.  This work was largely completed while the author was a visiting 
researcher at the Simons Institute in Berkeley, California.  We are also grateful
to two anonymous referees for very helpful suggestions and comments.

\bibliographystyle{abbrvnat}
\bibliography{refs}

\appendix

\section{Definitions for mDAGs}






\subsection{Basic definitions and m-separation}\label{sec:msep}

Let $\G=(V,\mathcal{D},\mathcal{B})$ be a mixed (hyper-)graph with 
directed edges $\mathcal{D}$ and bidirected simplicial complex $\mathcal{B}$.

\begin{dfn}
A \emph{path} in $\G$ is a sequence of edges and (distinct) vertices 
$\langle v_0, e_1, v_1, e_2, \ldots, e_k, v_k\rangle$, such that $v_{i-1},v_i \in e_i$ for $i=1,\ldots,k$.
A path is \emph{directed} if each $e_i$ is $v_{i-1} \to v_i$.  The
\emph{length} of the path is $k$ (the number of edges in it), and
this can be zero. 
\end{dfn}

\begin{dfn}
Given a vertex $v \in V$ in an mDAG $\G$ we define
\begin{align*}
\pa_{\G}(v) &= \{w: w \to v \text{ in } \G\}\\
\an_{\G}(v) &= \{w: w \to \cdots \to v \text{ in } \G \text{ or } w=v\}\\
\text{and} \qquad \dec_{\G}(v) &= \{w: v \to \cdots \to w \text{ in } \G \text{ or } w=v\}
\end{align*}
to be respectively the \emph{parents}, \emph{ancestors} and
\emph{descendants} of $v$.  

We use $v \lrarr w$ as a shorthand to denote that 
$v$ and $w$ are contained within some bidirected facet.  Then define
\begin{align*}
\sib_{\G}(v) &= \{w : w \lrarr v \text{ in } \G\}\\
\text{and} \qquad \dis_{\G}(v) &= \{w: w \lrarr \cdots \lrarr v \text{ in } \G \text{ or } w=v\}
    \end{align*}
to be the \emph{siblings} and \emph{district} of $v$ respectively. 
Siblings of $v$ are vertices for which a latent `parent' is shared, and the 
districts are easily identified as maximal connected red components in the 
graph. 
\end{dfn}

\begin{dfn}
Given a path $\pi$ of length $k$, an internal vertex $v_i$ (i.e.~not $v_0$ or 
$v_k$) is said to be a 
\emph{collider} on the path if the adjacent edges $e_{i},e_{i+1}$ have 
arrowheads at $v_i$.  Otherwise an internal vertex is a \emph{non-collider}.

A path from $a$ to $b$ is said to be \emph{open} given a set $C$ if no 
non-colliders on the path are in $C$, and any collider is in the set of 
vertices that can reach $C$ via a directed path (possibly of length zero).  
Otherwise the path is \emph{blocked}.

We say that sets of vertices $A$ and $B$ are \emph{m-separated} given a
set $C$ if every path from any $a \in A$ to any $b \in B$ is blocked by
$C$.  We denote this by $A \perp_m B \mid C$.
\end{dfn}

\subsection{Random variables and constraints} \label{sec:rvci}

We consider random variables $X_V = (X_v)_{v \in V}$ taking values in a finite-dimensional 
Cartesian product space $\mathcal{X}_V := \times_{v \in V} \mathcal{X}_v$.

\begin{dfn}
A distribution $p$ is said to satisfy the \emph{global Markov property} for 
an mDAG $\G$ if whenever $A,B,C$ are disjoint subsets of the vertices
of $\G$ and $A \perp_m B \mid C$, we have the corresponding 
conditional independence $X_A \indep X_B \mid X_C$ under $p$.
\end{dfn}

We can extend m-separation to \emph{e-separation} (or \emph{\textbf{e}xtended 
m-separation}) by first deleting some variables and their incident edges, and 
then checking for m-separations among what remains.

\begin{dfn}
We say sets of vertices $A$ and $B$ are \emph{e-separated} given a set $C$
and after deletion of $D$ if every path from any $a \in A$ to any $b \in B$ is 
either blocked by $C$ or passes through a node in $D$.  We denote this by 
$A \perp_e B \mid C \del D$.
\end{dfn}

Then a result from \citet{evans:12} tells us that an e-separation will induce
(at least) an inequality constraint on $p$.

\begin{thm}
Suppose that a distribution $p$ lies in the marginal model of an mDAG $\G$, and 
that the e-separation $A \perp_e B \mid C \del D$ holds in $\G$, where $X_D$ takes
values in a finite set.  Then, for every $x_D \in \mathcal{X}_D$, we have that
there exists a distribution $p^{x_D}$ such that:
\begin{align*}
\begin{array}{ll}
    p(y_{V \setminus D}, x_D) = p^{x_D}(y_{V \setminus D}, x_D) & \text{ for all } \; y_{V \setminus D} \in \mathcal{X}_{V \setminus D},
    \end{array}
\end{align*}
and $X_A \indep X_B \mid X_C$ under $p^{x_D}$.
\end{thm}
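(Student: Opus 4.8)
The plan is to prove the statement by reducing it to an \emph{interventional} distribution on the set $D$ and then grafting the observed slice $\{X_D = x_D\}$ onto it. Throughout I would work with a latent representation: since $p \in \M(\G)$ there is a distribution $\tilde p$ over $(X_V, X_L)$ that is Markov with respect to the canonical DAG $\overline{\G}$ and whose $X_V$-margin is $p$. For a fixed value $x_D \in \mathcal{X}_D$, define the post-intervention distribution $q$ over $X_{V \setminus D}$ by the truncated factorisation
\[
  q(x_{V\setminus D}) \;=\; \sum_{x_L}\; \prod_{v \in V\setminus D} \tilde p\big(x_v \cmid x_{\pa(v)}\big)\Big|_{X_D = x_D}\;\prod_{\ell \in L}\tilde p(x_\ell),
\]
that is, the distribution obtained from $\tilde p$ by deleting the factors for $v \in D$ and substituting the constant $x_D$ wherever a member of $D$ appears as a parent; this is exactly $\tilde p$ under $\Do(X_D = x_D)$, marginalised over the latents.

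First I would show that $q$ satisfies the target independence $X_A \indep X_B \mid X_C$. The intervention $\Do(X_D = x_D)$ removes every edge into $D$ and makes each $X_d$ a degenerate constant, so in the post-intervention DAG the vertices of $D$ are isolated roots carrying no randomness. Consequently any path between $A$ and $B$ through a vertex $d \in D$ meets $d$ as a non-collider (it has no incoming edges) lying outside $C$, and is therefore blocked; the surviving paths are precisely those of $\overline{\G}$ with $D$ and its incident edges deleted. Hence d-separation of $A$ and $B$ given $C$ in the post-intervention graph is equivalent to the hypothesised e-separation $A \perp_e B \mid C \del D$ in $\G$, and since $q$ is Markov with respect to the post-intervention graph we obtain $X_A \indep X_B \mid X_C$ under $q$.

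The key step is a domination inequality: for every $x_{V\setminus D}$,
\[
  p(x_{V\setminus D}, x_D) \;\le\; q(x_{V\setminus D}).
\]
This follows by comparing the two expressions summand-by-summand over $x_L$: the observed slice $p(x_{V\setminus D}, x_D)$ has exactly the same integrand as $q$ except for the additional factors $\prod_{d \in D}\tilde p(x_d \cmid x_{\pa(d)})$, each a conditional probability in $[0,1]$, so their product is at most $1$ and the inequality holds term by term, hence after summing over $x_L$. Given this, I would define $p^{x_D}$ over $X_V$ by placing mass $p(x_{V\setminus D}, x_D)$ on each point $(x_{V\setminus D}, x_D)$ of the slice and distributing the non-negative remainder $q(x_{V\setminus D}) - p(x_{V\setminus D}, x_D)$ over the values $x_D' \ne x_D$ (arbitrarily, e.g.\ onto a single fixed alternative value; if $|\mathcal{X}_D| = 1$ the remainder is forced to vanish and one takes $p^{x_D} = p$). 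By construction the slice $\{X_D = x_D\}$ of $p^{x_D}$ agrees with $p$, giving the first conclusion, while the $(V\setminus D)$-margin of $p^{x_D}$ equals $q$; since $A, B, C \subseteq V\setminus D$, the independence for $q$ transfers verbatim to $p^{x_D}$.

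I expect the domination inequality to be the conceptual crux, together with the graphical lemma identifying the hypothesised e-separation with d-separation in the post-intervention DAG; these are the two places where the structure of the marginal model, rather than mere probability manipulation, is used. The remaining verifications --- that $p^{x_D}$ is a genuine probability distribution, the handling of points where $q(x_{V\setminus D}) = 0$ (where the slice mass is also zero), and the degenerate case $|\mathcal{X}_D| = 1$ --- are routine.
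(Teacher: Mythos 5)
Your overall architecture --- pass to a latent representation on the canonical DAG, intervene with $\Do(X_D = x_D)$ to obtain $q$, prove the domination $p(x_{V\setminus D}, x_D) \le q(x_{V\setminus D})$ term by term in the latent variable, and then graft the observed slice onto $q$, spreading the excess mass over values $x_D' \neq x_D$ --- is sound, and it is essentially the argument behind the result as it appears in the literature: note that the present paper gives no proof of this theorem at all, simply deferring to \citet{evans:12}. Your domination inequality and the grafting step are correct as stated, and the final transfer of the independence works because the margin of $p^{x_D}$ over $V \setminus D$ equals $q$ while $A, B, C \subseteq V \setminus D$.

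However, your justification that $q$ satisfies $X_A \indep X_B \mid X_C$ contains a genuine error. You assert that a path meeting $d \in D$ as a non-collider outside $C$ is ``therefore blocked''; this inverts the separation rule --- a non-collider blocks a path precisely when it \emph{is} in the conditioning set. As a consequence, your claimed equivalence between d-separation given $C$ in the post-intervention graph and the e-separation $A \perp_e B \mid C \del D$ is false: take $\G$ with observed vertices $a,b,d$ and edges $d \to a$, $d \to b$ only. Then $a \perp_e b \mid \emptyset \del \{d\}$, but in the post-intervention graph (which retains the \emph{outgoing} edges of $D$, so the vertices of $D$ are roots but not isolated) $a$ and $b$ remain d-connected through the fork at $d$. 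The conclusion you want is nevertheless true, for a different reason: after substituting the constant $x_D$, each kernel $\tilde p(x_v \cmid x_{\pa(v)})$ with $v \in V \setminus D$ is a function of $x_{\pa(v) \setminus D}$ alone, so $q$ (jointly with the latents) factorizes according to $\overline{\G}$ with the vertices $D$ and \emph{all} of their incident edges deleted --- equivalently, the almost-surely constant variables $X_D$ can simply be excised from the model. The e-separation hypothesis is by definition an m-separation in $\G$ with $D$ deleted, so the global Markov property for that reduced graph yields $X_A \indep X_B \mid X_C$ under $q$. With this step repaired (and with $\sum_{x_L}$ replaced by an integral, since the latent variables need not be discrete), your proof is complete.
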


If we consider a distribution in which $X_D = x_D$ for some arbitrary state 
$x_D \in \mathcal{X}_D$ with very high probability, it is clear that this induces 
(at least) an inequality constraint.  See \citet{evans:12} for further details.

\subsection{Latent projection} \label{sec:lat_proj}

Given an mDAG $\G$ with vertices $V \dot\cup L$ where $V$ and $L$ are 
disjoint, the \emph{latent projection} of $\G$ over $V$ is given by 
the mDAG with vertices $V$ and edges within $V$ given by:
\begin{itemize}
    \item $a \to b$ whenever there is a directed walk in $\G$ from $a$ to $b$ 
    and any other (internal) vertices on the path are in $L$;
    \item $B$ is a bidirected face if there exists a \emph{source} such that 
    there is a directed path from the source down to each $b \in B$ and every
    variable on that path (other than $b$) is in $L$.
\end{itemize}
Here a `source' is either a single bidirected face or a variable that is contained 
in $L$.  See \citet{evans16mdags} for some examples.

\subsection{Maximal Ancestral Projection for mDAGs} \label{sec:anc_proj}

For this section we consider only ordinary mixed graphs (i.e.~without 
any hyper-edges) that contain both bidirected and directed edges. 

\begin{dfn}
An ordinary mixed graph is \emph{ancestral} if its directed part is
acyclic, and no vertex is an ancestor of any of its siblings; it is
\emph{maximal} if every pair of vertices that are not adjacent satisfy an 
m-separation or a nested constraint.  Note that ancestral graphs are, by definition, simple.

For an mDAG $\G$, the \emph{maximal ancestral projection} $\G^*$ includes edges 
\begin{itemize}
    \item $a \to b$ if $a \in \an_\G(b)$; and 
    \item $a \lrarr b$ if there is no ancestral relation in $\G$;
\end{itemize}
for any pair of vertices $a,b$ that cannot be m-separated in $\G$. 
\end{dfn}

The crucial fact about a maximal ancestral projection is that it always
induces precisely the same m-separations as the original mDAG did \citep{richardson02ancestral, evans16mdags}.

\subsection{Partial Ancestral Graphs} \label{sec:pag}

Given the maximal ancestral projection of an mDAG, one can consider all
these projections for all mDAGs over the same set of vertices that are
ordinary Markov equivalent to one another.  We can denote this equivalence
class $[\G]$.  Then the \emph{partial ancestral graph} $\mathcal{P} = [\G]$
is the unique graph that:
\begin{itemize}
    \item has the same skeleton as the maximal ancestral projection of any element of $[\G]$;
    \item has an arrowhead (respectively tail) in any position for which the maximal ancestral
    projection of every element of the equivalence class has an arrowhead (resp.~tail);
    \item has a circle at the end of any other edge.
\end{itemize}
More details about PAGs can be found in \citet{richardson03causal} and 
\citet{zhang:thesis,zhang08completeness}.

\subsection{Nested Models and Fixing} \label{sec:fix}

\begin{dfn}
A vertex is said to be \emph{fixable} if it has no (strict) descendants within 
its own district; that is, if $\dec_\G(v) \cap \dis_\G(v) = \{v\}$.
\end{dfn}

Note that a vertex $v$ is fixable in $\G$ precisely when, given a distribution 
$p$ that is nested Markov with respect to $\G$, we can identify the distribution
that would result if we \emph{intervened} to fix the value of $X_v = x_v$ 
from $p$ \citep{richardson23nested}.





\begin{figure}
     \centering
 \begin{tikzpicture}
 [>=stealth, node distance=20mm]
 \pgfsetarrows{latex-latex};
 \begin{scope}
 \node (0) {};
  \node[rv] at (150:1) (1a) {$a$};
   \node[rv] at (90:2) (2a) {$b$};
  \node[rv] at (30:1) (4a) {$d$};
  \node[rv] at (270:1) (5a) {$e$};
  \node[rv] at (210:2) (3a) {$c$};
  \node[rv] at (330:2) (6) {$f$};
  \draw[deg] (2a) -- (4a);
  \draw[deg] (1a) -- (2a);
  \draw[deg] (5a) -- (6);
\draw[beg] (1a) -- (3a);
  \draw[beg] (4a) to[bend left] (6);
 \draw[beg] (1a) to[bend left] (2a);
 \end{scope}
 \end{tikzpicture}
     \caption{A conditional mDAG obtained by fixing $e$ from Figure \ref{fig:mDAG}(a).} 
     \label{fig:mDAGfix}
 \end{figure}
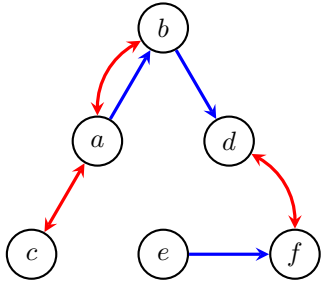

\begin{dfn}
Given an mDAG $\G$ and a vertex $v$ that is fixable, the \emph{Markov 
blanket} of $v$ is given by
\begin{align*}
    \mbl_\G(v) := (\dis_\G(v) \setminus \{v\}) \cup \pa_\G(\dis_\G(v)).
\end{align*}
\end{dfn}

For an arbitrary set $V$, let $\mathcal{P}(V)$ denote the \emph{power set}
of $V$; that is, the collection of all subsets of $V$.

\begin{dfn}
Let $\G=(V,\mathcal{D},\mathcal{B})$ be an mDAG.  Then if we can \emph{fix} 
a vertex 
$v \in V$ we obtain a new graph $\G^*$ with vertices $V$, and edges obtained by 
taking precisely those edges in $\mathcal{B} \cap \mathcal{P}(V \setminus \{v\})$ and $\mathcal{D} \cap \left(V \times (V \setminus \{v\})\right)$.  
\end{dfn}

In other words, when we fix we remove (or reduce) any edges that have 
arrowheads at the vertex that has been fixed.

\begin{dfn}
We also associate a fixing operation to the distribution.  If we fix $v$
from $\G$, then we replace $p$ with $p^*$, 
given by
\begin{align*}
p^*(x_V) &= \frac{p^*(x_v)}{p(x_v \cmid x_{\mbl(v)})} \cdot p(x_V).
\end{align*}
In other words, we remove any dependence of $X_v$ on its Markov blanket.
\end{dfn}


\begin{exm}
Consider the mDAG in Figure \ref{fig:mDAG}(i) and notice that $e$ is
fixable; after fixing it we obtain the graph in Figure \ref{fig:mDAGfix}.  
Whereas previously there was no set that could m-separate $b$ and $f$, in 
spite of them not being adjacent, notice that now they are m-separated 
conditionally
upon $e$.  This is an example of a non-trivial \emph{nested constraint}.
\end{exm}

\subsubsection*{Results relating to the nested model}

Let the set of distributions that are ordinary Markov with respect to an
mDAG $\G$ be denoted $\mathcal{O}(\G)$, and those that are nested Markov 
be denoted $\mathcal{N}(\G)$. 

\begin{prop} \label{prop:subequiv}
Suppose that $\G$ is an mDAG.  Then:
\begin{align*}
    p \in \mathcal{M}(\G) \implies p \in \mathcal{N}(\G) \implies p \in \mathcal{O}(\G).
\end{align*}
In other words, distributional equivalence is a stronger requirement than
nested equivalence, which is in turn a stronger requirement than 
ordinary equivalence.
\end{prop}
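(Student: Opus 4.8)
The plan is to prove the two implications separately, working from the easier one to the harder one, and to lean on the results quoted earlier from \citet{richardson23nested} and \citet{evans16mdags} rather than reconstructing the nested formalism from scratch.

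For the second implication, $p \in \mathcal{N}(\G) \implies p \in \mathcal{O}(\G)$, I would argue directly from the definitions. The nested Markov model is obtained by imposing, for $\G$ itself and for every conditional mDAG reachable from $\G$ by a valid sequence of fixings, all of the conditional independences that can be read off by m-separation in the relevant graph. Taking the empty fixing sequence as a special case, any distribution in $\mathcal{N}(\G)$ must already satisfy every m-separation of $\G$ itself, which is exactly what it means to lie in $\mathcal{O}(\G)$. Hence $\mathcal{N}(\G) \subseteq \mathcal{O}(\G)$, with $\mathcal{N}(\G)$ being the more constrained (smaller) model. Essentially no work beyond unpacking the definition is needed here.

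For the first implication, $p \in \M(\G) \implies p \in \mathcal{N}(\G)$, I would proceed by induction on the length of the fixing sequence, using the commutation of fixing and marginalization as the engine. The base case is that any margin of a distribution Markov with respect to the canonical DAG $\overline{\G}$ satisfies the global (m-separation) Markov property for $\G$; this is the soundness of m-separation for latent projections established in \citet{evans16mdags}, and it yields the ordinary (empty-fixing) part of the nested model. For the inductive step, suppose $v$ is fixable in $\G$ and let $\G^*$ be the graph obtained by fixing $v$. The result of \citet{richardson23nested} quoted above tells us that $p \in \M(\G)$ implies $p^* \in \M(\G^*)$, i.e.\ marginalization and fixing commute. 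Since $\G^*$ is reachable by a strictly shorter fixing sequence, the induction hypothesis applies to $p^* \in \M(\G^*)$ and gives that $p^*$ is nested Markov with respect to $\G^*$. Assembling the constraints obtained along all valid fixing sequences shows that $p$ satisfies every defining constraint of $\mathcal{N}(\G)$, so $p \in \mathcal{N}(\G)$.

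I expect the main obstacle to be bookkeeping rather than any deep difficulty: one must verify that every constraint in the definition of $\mathcal{N}(\G)$ is indeed an m-separation in some conditional mDAG reachable by a valid fixing sequence, and that fixability is preserved along such sequences so that the induction is well-founded. Because the genuinely substantive ingredient — that $p \in \M(\G)$ forces $p^* \in \M(\G^*)$ — is precisely the commutation result already available to us, the argument reduces to feeding this property into the inductive scheme. It is worth noting for context that all three inclusions are strict in general, since a margin of a latent-variable model may in addition obey inequality constraints (as in the CHSH example of Example~\ref{exm:const}) that neither $\mathcal{N}(\G)$ nor $\mathcal{O}(\G)$ imposes.
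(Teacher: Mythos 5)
Your proof is correct, but note that the paper itself offers \emph{no} proof of this proposition: it is stated in the appendix as a known fact, implicitly imported from \citet{richardson23nested} and \citet{evans16mdags}. Your reconstruction is exactly the argument one would expect those citations to compress. The second implication, $\mathcal{N}(\G) \subseteq \mathcal{O}(\G)$, is definitional as you say: the empty fixing sequence is a valid fixing sequence, and its associated constraints are precisely the m-separations of $\G$ itself. The first implication correctly isolates the only substantive ingredient, namely the commutation fact that $p \in \M(\G)$ implies $p^* \in \M(\G^*)$ after fixing a fixable vertex $v$ --- which is exactly the result the paper quotes from \citet{richardson23nested} in Section 3 --- and feeds it into an induction whose well-foundedness rests on the standard fact that valid fixing sequences consist of distinct vertices, so their length is bounded by $|V|$ and strictly decreases as you descend. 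The bookkeeping you flag (that the constraints defining $\mathcal{N}(\G)$ are indexed by valid fixing sequences, and that the constraints of $\mathcal{N}(\G)$ indexed by sequences beginning with $v$ are exactly the constraints of $\mathcal{N}(\G^*)$ applied to $p^*$) is genuine but routine. One small inaccuracy in your closing remark: the CHSH example justifies strictness of $\M(\G) \subsetneq \mathcal{N}(\G)$, but strictness of $\mathcal{N}(\G) \subsetneq \mathcal{O}(\G)$ is witnessed by a nested \emph{equality} constraint (e.g.\ the Verma-type constraint of Example \ref{exm:const}, where $X_d \indep X_a \mid X_c$ holds after fixing $X_c \mid X_b$), not by an inequality; inequalities do not separate $\mathcal{N}$ from $\mathcal{O}$ since neither model imposes them. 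This does not affect the proposition itself, which asserts only the inclusions.
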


We now provide some results that are used in the proof of Proposition 
\ref{prop:noDAGm}.

\begin{lem} \label{lem:order}
Suppose that in an mDAG $\G$ we have $a \perp_m b \mid D$ but $a \not\perp_m b \mid D \cup \{s\}$.
Then there is a valid topological ordering in which $s$ comes after $a$, $b$ and every
element in $D$.
\end{lem}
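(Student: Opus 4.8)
The plan is to reduce the claim to a purely ancestral statement. A topological ordering of the directed part of $\G$ places $s$ after every vertex of $\{a,b\}\cup D$ exactly when no vertex of this set is a strict descendant of $s$, i.e.\ when $\dec_\G(s)\cap(\{a,b\}\cup D)=\emptyset$; equivalently, no directed path runs from $s$ to $a$, to $b$, or to any $d\in D$. The whole task therefore becomes ruling out each of these directed paths, after which the required ordering is obtained as a linear extension of the DAG.

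First I would isolate the mechanism behind the change of separation. Since $a\not\perp_m b\mid D\cup\{s\}$ there is a path $\pi$ from $a$ to $b$ open given $D\cup\{s\}$; by $a\perp_m b\mid D$ this same $\pi$ is blocked given $D$. No non-collider of $\pi$ lies in $D\cup\{s\}$, hence none lies in $D$, so $\pi$ can only be blocked by $D$ through a collider $c$ with $c\notin\an_\G(D)$. Openness given $D\cup\{s\}$ forces $c\in\an_\G(D\cup\{s\})$, and since an ancestor of $D\cup\{s\}$ is an ancestor of $D$ or of $s$, we get $c\in\an_\G(s)\setminus\an_\G(D)$; in particular there is a directed path $c\to\cdots\to s$.

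The constraint for $D$ is then immediate: were $s$ a strict ancestor of some $d\in D$, then $c\to\cdots\to s\to\cdots\to d$ would give $c\in\an_\G(d)\subseteq\an_\G(D)$, contradicting $c\notin\an_\G(D)$. For $a$ (the case of $b$ following by the symmetric argument, choosing instead the collider closest to $a$) I would argue by contradiction: suppose $s\in\an_\G(a)\setminus\{a\}$, and among the colliders of $\pi$ lying in $\an_\G(s)\setminus\an_\G(D)$ choose $c$ to be the one closest to $b$, so that every collider of $\pi$ strictly between $c$ and $b$ lies in $\an_\G(D)$. Concatenating the directed path $c\to\cdots\to s\to\cdots\to a$ (read backwards from $a$ to $c$) with the sub-path $\pi_2$ of $\pi$ from $c$ to $b$ yields a walk $\pi^*$ from $a$ to $b$, and I would verify that it is open given $D$: the internal vertices of the reversed segment are strict descendants of $c$ and so cannot lie in $D$ (else $c\in\an_\G(D)$); at $c$ the reversed segment contributes a tail while $\pi_2$ contributes an arrowhead, so $c$ becomes a non-collider, and $c\notin D$ since $c\notin\an_\G(D)$; and the non-colliders of $\pi_2$ avoid $D$ (they already avoid $D\cup\{s\}$) while its colliders lie in $\an_\G(D)$ by the choice of $c$. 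An open connection from $a$ to $b$ given $D$ contradicts $a\perp_m b\mid D$, giving $s\notin\an_\G(a)$.

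The one genuinely delicate step is that $\pi^*$ is a priori only a \emph{walk}, since the directed segment $c\to\cdots\to a$ may meet $\pi_2$ at vertices other than $c$. The main obstacle is thus to pass from this open walk to a genuine open \emph{path} with the same endpoints and conditioning set $D$; I would handle it either by splicing at the first point of intersection and checking that the collider/non-collider bookkeeping above is preserved, or by invoking the standard fact that an active walk between two vertices yields an active path. With that in hand all three families of directed paths are excluded, $\dec_\G(s)$ is disjoint from $\{a,b\}\cup D$, and a linear extension of the DAG placing $s$ after this set exists.
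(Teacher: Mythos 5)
Your proof is correct and takes essentially the same route as the paper's: extract a path open given $D \cup \{s\}$ but blocked by $D$, deduce a collider in $\an_\G(s) \setminus \an_\G(D)$, use it to rule out $s \in \an_\G(D)$, and for $a$ and $b$ splice the directed path from that collider through $s$ onto the remaining segment of the path to contradict $a \perp_m b \mid D$. Your write-up is in fact more careful than the paper's, which glosses over exactly the points you flag (the choice of collider ensuring the remaining colliders lie in $\an_\G(D)$, and the passage from an open walk to an open path behind its ``clearly this path is open'').
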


\begin{proof}
    Suppose not.  Then there is a path from $a$ to $b$ that is blocked by $D$ but becomes 
    open when we also condition on $s$.  This implies that there is a collider that has $s$
    as a descendant, but no other element of $D$.  (If there are multiple colliders, then reduce 
    to one by taking the directed path from the first collider and the final collider to $s$, and 
    use whichever vertex is the one at which these paths meet.)  By the supposition that
    $a \perp_m b \mid D$ there is no directed path from $s$ to any element of $D$.

    Now, if $s$ is an ancestor of $b$ we can take the path from $a$ to the collider, then follow
    the directed path from here to $s$ and then to $b$.  Clearly this path is open without 
    conditioning on $s$, so we reach a contradiction. 
\end{proof}

\begin{prop} \label{prop:noDAG}
Consider an independence model $\mathcal{I}$ such that:
\begin{align*}
&& && v &\indep s &&[\mathcal{I}] && &&\\
&& && a &\indep b \mid D &&[\mathcal{I}] && &&\\
&& && a &\not\indep b \mid D \cup \{s\} &&[\mathcal{I}], && &&
\end{align*}
where $v \in \{a,b\}$.  Then there is no DAG that 
faithfully represents the independence model $\mathcal{I}$.

If $D$ is chosen to be inclusion minimal such that $a \indep b \mid D$ holds,
then $v \in D$ is also not allowed by any faithful DAG independence model.
\end{prop}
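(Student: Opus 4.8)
The plan is to argue by contraposition from faithfulness. Suppose some DAG $\mathcal{H}$ realised exactly $\mathcal{I}$, so that m-separation (equivalently d-separation) in $\mathcal{H}$ agrees with the independence statements of $\mathcal{I}$; I will then exhibit a collider-free path that $\mathcal{I}$ forbids. Without loss of generality take $v=a$, so $a\indep s$. The pair $a\indep b\mid D$ and $a\not\indep b\mid D\cup\{s\}$ is exactly the hypothesis of Lemma \ref{lem:order}: there is a path $\pi$ from $a$ to $b$ blocked given $D$ but active given $D\cup\{s\}$, and since adding a single vertex to a conditioning set can only open a blocked path by activating a collider, $\pi$ carries a collider $w$ with $s\in\dec(w)$ and $\dec(w)\cap D=\emptyset$. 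Lemma \ref{lem:order} also furnishes a topological order with $s$ after $a$, $b$ and all of $D$, i.e.\ $s\notin\an(a)\cup\an(b)\cup\an(D)$.

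The core idea is to convert this collider into a forbidden common ancestor. In a DAG the marginal independence $a\indep s$ is precisely the statement $\an(a)\cap\an(s)=\emptyset$, so it suffices to produce a vertex lying in both ancestor sets. The segment $\pi[a,w]$ reaches $w$ through an arrowhead, while $w\to\cdots\to s$ leaves $w$ through a tail, so splicing the two makes $w$ a non-collider on the resulting path from $a$ to $s$. If $\pi[a,w]$ carried no other collider it would be collider-free, hence of the form $a\leftarrow\cdots\leftarrow t\to\cdots\to w$, and its source would satisfy $t\in\an(a)\cap\an(w)\subseteq\an(a)\cap\an(s)$ (using $\an(w)\subseteq\an(s)$), contradicting $a\indep s$. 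So the whole difficulty is concentrated in the colliders of $\pi$ lying strictly between $a$ and $w$.

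The hard part will be controlling those intermediate colliders, and this is where I expect minimality of $D$ to be indispensable (indeed without it the statement fails, as one can separate $a$ and $b$ by padding $D$). Taking $w$ to be the first collider on $\pi$ reaching $s$, every earlier collider must instead have a descendant in $D$; invoking the standard fact that an inclusion-minimal separator is ancestral, $D\subseteq\an(a)\cup\an(b)$, so each such collider lies in $\an(a)\cup\an(b)$. I would then propagate the trek-sources along $\pi$ starting from $a$: the base source lies in $\an(a)$, and as long as a source lies in $\an(a)$ while the next collider lies in $\an(s)$ we are done by the contradiction above. The obstacle is the alternative in $D\subseteq\an(a)\cup\an(b)$ that would anchor an intermediate collider in $\an(b)\setminus\an(a)$ and let the chain drift off the $a$-side before reaching $w$; I plan to rule this out by choosing $\pi$ to be a shortest active path and checking that any such $\an(b)$-anchored detour renders some element of $D$ redundant on the reduced conditioning set $D\cup\{s\}\setminus\{d\}$, contradicting minimality, while the freedom to take $v$ on whichever side of $\{a,b\}$ is independent of $s$ handles the symmetric configuration. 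Finally, for the closing claim, if $D$ is inclusion-minimal and the vertex independent of $s$ is some $v\in D$ rather than an endpoint, then $v\in\an(a)\cup\an(b)$ by the same ancestral property, so $v$ lies on a directed path into $a$ or into $b$; rerunning the trek argument with $v$ in place of that endpoint again forces a vertex in $\an(v)\cap\an(s)$, contradicting $v\indep s$.
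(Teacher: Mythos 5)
Your route is genuinely different from the paper's: the paper takes the ordering from Lemma \ref{lem:order} with $s$ last and then claims that, in a faithful DAG, the predecessors of $s$ would have to split into two blocks $S \dot\cup T$ with $S \cup \{s\} \perp_m T$ and $v \in T$; you instead reduce $a \indep s$ to $\an(a) \cap \an(s) = \emptyset$ and try to propagate trek sources along the newly opened path. The pieces you actually prove are correct: the existence of the collider $w$ with $s \in \dec_\G(w)$ and $\dec_\G(w) \cap D = \emptyset$, the splicing contradiction when $\pi[a,w]$ is collider-free, and the standard fact that an inclusion-minimal separator satisfies $D \subseteq \an(a) \cup \an(b)$. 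But the proof stops being a proof exactly where you say the difficulty is: the case of an intermediate collider anchored in $\an(b) \setminus \an(a)$ is handled only by a plan (shortest active path, plus the hope that such a detour makes some $d \in D$ redundant), and the appeal to symmetry cannot help, since the hypothesis grants independence from $s$ for only \emph{one} endpoint $v \in \{a,b\}$, and it may be the wrong one.

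That gap cannot be closed, because the configuration you are trying to exclude is realizable by a DAG. Take vertices $a, b, s, d, c_1, c_2, t_0, t_1, t_2$ and edges
\begin{align*}
t_0 \to a, \qquad t_0 \to c_1 \gets t_1, \qquad t_1 \to c_2 \gets t_2, \qquad t_2 \to b, \qquad c_1 \to d \to b, \qquad c_2 \to s.
\end{align*}
Then $\an(a) = \{a, t_0\}$ and $\an(s) = \{s, c_2, t_1, t_2\}$ are disjoint, so $a \perp_m s$. The only two paths from $a$ to $b$ are the trek $a \gets t_0 \to c_1 \to d \to b$ and $\pi : a \gets t_0 \to c_1 \gets t_1 \to c_2 \gets t_2 \to b$; the trek is open marginally and blocked by $d$, while $\pi$ is blocked at $c_2$ given $\{d\}$, so $D = \{d\}$ is an inclusion-minimal set with $a \perp_m b \mid D$. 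Conditioning on $\{d,s\}$ opens both colliders $c_1$ and $c_2$ on $\pi$, so $a$ and $b$ are \emph{not} m-separated given $D \cup \{s\}$. Hence the faithful independence model of this DAG satisfies every hypothesis of the proposition, with $v = a$ and $D$ inclusion-minimal, yet it is faithfully represented by a DAG. This is precisely your drift case ($c_1 \in \an(d)$ with $d \in \an(b) \setminus \an(a)$, and $w = c_2 \in \an(s)$), so no refinement of the shortest-path or minimality idea can rule it out. The obstacle is the statement itself rather than your execution: the same graph also defeats the predecessor-splitting step in the paper's own proof (no partition $S \dot\cup T$ of the predecessors of $s$ with $S \cup \{s\} \perp_m T$ and $a \in T$ exists there), so the proposition would need strengthened hypotheses, reflecting how $a$, $b$, $s$ and $D$ actually arise in the proof of Proposition \ref{prop:noDAGm}, before a proof along either line could go through.
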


\begin{proof}
From Lemma \ref{lem:order} we know that none of $a$, $b$ or $D$ are necessarily 
descendants of $s$.  In this case, choose a particular DAG such that $s$ comes 
after $a$, $b$ and $D$ in the chosen topological ordering (say $<$).  

Then the only way in which $\mathcal{I}$ could hold with a factorization that 
represents a DAG is if we can divide the predecessors of $s$ under $<$ into two sets $S \cup T$, and
we have $S \cup \{s\} \perp_m T$, with $v \in T$.  In this case, if
either $a$ or $b$ is in $T$ then conditioning on $s$ cannot make them 
dependent conditional on any subset that m-separates them.  If $a,b \notin T$
but some $d \in D \cap T$, 
then the m-separation between $a$ and $b$ would hold given $D \setminus \{d\}$, 
which contradicts the minimality of $D$.  Either way, we obtain the result. 
%
%
%
%
\end{proof}

\section{Other results} \label{sec:or}


\begin{prop} \label{prop:disc_path2}
Suppose that there is an mDAG with no locally unshielded collider path of 
length at least 3, but that does have a discriminating path from $a$ to $c$ 
for $b$ of length at least 4.  Then there also exists an induced subgraph 
isomorphic to Figure \ref{fig:disc}(i).
\end{prop}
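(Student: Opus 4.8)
The plan is to work in the PAG $\mathcal{P} = [\G]$ (equivalently in a MAG representing the equivalence class), where discriminating paths, collider paths, circle marks and shields all live, and to argue by taking a discriminating path for $b$ of \emph{minimal} length; the hypothesis guarantees at least one exists, so this is well defined. I would then show that the minimal length must be $3$: a length-$3$ discriminating path is exactly a copy of Figure \ref{fig:disc}(i), so establishing minimality $=3$ produces the required induced subgraph. Equivalently this is an induction that peels off one intermediate collider at a time.

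First I would fix notation and extract the rigid part of the structure. Write the path as $\pi = \langle a, v_1, \ldots, v_k, b, c\rangle$ with $k \ge 2$. By the definition of a discriminating path each $v_i$ ($1 \le i \le k$) is a collider on $\pi$ and a parent of $c$; since consecutive intermediate vertices are colliders, each edge $v_i - v_{i+1}$ has arrowheads at both ends, i.e.\ $v_i \lrarr v_{i+1}$, and the edge into $v_1$ has an arrowhead at $v_1$ (with a circle at $a$). In particular $v_k - b$ has an arrowhead at $v_k$, so $\langle a, v_1, \ldots, v_k, b\rangle$ is a collider path of length $k+1 \ge 3$. Since $\G$ has no locally unshielded collider path of length at least $3$, this path cannot be locally unshielded, so there is a shielding edge joining two vertices at distance two along it: either $v_i - v_{i+2}$ for some $1 \le i \le k-2$, or $a - v_2$, or $v_{k-1} - b$.

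The reduction step is then to delete the vertex spanned by such a shield: delete $v_{i+1}$ for an interior shield, $v_1$ for the shield $a - v_2$, and $v_k$ for the shield $v_{k-1} - b$. The surviving $v_j$ remain parents of $c$, and $a$ stays non-adjacent to $c$, so the shortened walk is again a discriminating path for $b$ ending in the edge $b - c$, contradicting minimality --- \emph{provided} skipping the vertex preserves the collider structure. This proviso is the main obstacle: a \emph{directed} shield such as $v_i \to v_{i+2}$ (tail at $v_i$) would leave $v_i$ a non-collider on the shortened path and destroy the construction. I would resolve it with an orientation lemma showing that the shields in the relevant positions must carry an arrowhead at the vertex that has to stay a collider --- indeed must be bidirected in the interior case: because $v_i$, $v_{i+1}$, $v_{i+2}$ share the common child $c$ and satisfy $v_i \lrarr v_{i+1} \lrarr v_{i+2}$, the ancestral (no almost-directed-cycle) and maximality properties of MAGs force $v_i \lrarr v_{i+2}$, and the two endpoint shields are handled analogously using that $a$ is an ancestor of nothing on $\pi$ and that the mark at $b$ is precisely the one the discriminating path fixes. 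This orientation analysis, rather than the bookkeeping, is the technical heart; I would lean on the standard theory of discriminating paths in MAGs and PAGs (Richardson--Spirtes; Zhang) to establish it, and the same analysis shows that the bidirected edges present among the colliders (these exist as $k \ge 2$) force the configuration of Figure \ref{fig:disc}(i) rather than that of Figure \ref{fig:disc}(iii).

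Finally, the base case: once no further shortening is possible the path has length $3$, say $\langle a, v_1, b, c\rangle$. Minimality forces $a \not\sim b$ (an $a-b$ edge would allow a further shortening, or contradict minimality), and $a \not\sim c$ holds by the definition of a discriminating path, so the induced subgraph on $\{a, v_1, b, c\}$ contains exactly a circle-arrowhead edge from $a$ into $v_1$, the directed edge $v_1 \to c$, and the bidirected edges $v_1 \lrarr b$ and $b \lrarr c$ --- precisely Figure \ref{fig:disc}(i). This yields the desired induced subgraph and completes the argument.
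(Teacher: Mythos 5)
Your plan founders on exactly the step you flag as the ``technical heart'': the claimed orientation lemma is false. Ancestrality and maximality do \emph{not} force a shield over a collider triple to be bidirected (or even to carry the needed arrowhead), not even when all the relevant vertices are parents of $c$. Concretely, consider the MAG on $\{a,v_1,v_2,b,c\}$ with edges $a\to v_1$, $v_1\lrarr v_2$, $v_2\lrarr b$, $b\lrarr c$, $v_1\to c$, $v_2\to c$, together with the directed shield $v_1\to b$. This graph is ancestral and maximal, $\langle a,v_1,v_2,b,c\rangle$ is a discriminating path for $b$ of length $4$, and every collider path of length $3$ in it is shielded, so the hypotheses of Proposition~\ref{prop:disc_path2} hold. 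But the shield has a \emph{tail} at $v_1$, so after deleting $v_2$ the walk $\langle a,v_1,b,c\rangle$ is not a discriminating path ($v_1$ is no longer a collider); moreover one can check that $b$ has \emph{no} discriminating path of length $3$ at all in this graph. Hence your central claim --- that a minimal-length discriminating path \emph{for $b$} must have length $3$ --- is simply false, and no repair of the orientation lemma can rescue it. (Note that the paper's own argument treats directed shields such as $v_{k-1}\to b$ as the generic case to be handled, not as an impossibility.)

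What saves the proposition in this example is that the induced subgraph on $\{a,v_1,v_2,b\}$ realizes the configuration of Figure~\ref{fig:disc}(i): map $(a,v_1,v_2,b)$ to $(a,v,b,c)$, with the shield $v_1\to b$ playing the role of the edge $v\to c$. In other words, the length-$3$ discriminating path one ends up with discriminates a \emph{different} vertex ($v_2$, with $b$ in the role of $c$), not $b$ itself. This is precisely the move the paper makes and your proposal lacks: instead of shortening the given path, the paper invokes the result cited from \citet{claassen22greedy} that a shielded collider such as $\langle v_1,v_2,b\rangle$ must itself be discriminated by a path of strictly lower \emph{order}, and it inducts on that order --- switching the target vertex along the way --- until a first-order discriminating path is reached, which under the no-locally-unshielded-collider-path hypothesis must look like Figure~\ref{fig:disc}(i). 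An induction on the length of discriminating paths for the fixed vertex $b$ cannot reach this conclusion; the passage to discriminating paths for other vertices via collider order is the missing idea.
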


\begin{proof}
If there is a discriminating path $\langle a,v_1,\ldots,v_k,b,c\rangle$ 
with $k \geq 2$, then clearly either there is a locally unshielded 
collider path of length at least 3, or there is 
a directed edge between the vertices $v_{k-1}$ and $b$, or between $v_{k-2}$ and $v_k$.  In the latter case, the colliders 
$\langle v_{k-2},v_{k-1},v_k\rangle$ and 
$\langle v_{k-1},v_k,b \rangle$ must have discriminating paths of a 
strictly lower order of their own \citep{claassen22greedy}.  Hence
we can consider a lower order discriminating path, and by induction we 
will eventually reach a first-order discriminating path.  In this case, 
if $k \geq 2$ then $a \, \sto v_1 \lrarr v_2 \getss \, v_3$ (where 
possibly $v_3 = b$) will be a locally unshielded collider path of 
length 3, or we will have a discriminating path that looks like 
Figure \ref{fig:disc}(i).
%
%
%
\end{proof}

\end{document}